\documentclass[10pt]{amsart}


\usepackage{mathtools, amsmath, amssymb}
\usepackage{hyperref}
\usepackage[all]{xy}
\usepackage{color,graphicx}
\usepackage{enumitem}
\usepackage{tikz, pgfplots}

\makeatletter
\renewcommand*\env@matrix[1][*\c@MaxMatrixCols c]{%
  \hskip -\arraycolsep
  \let\@ifnextchar\new@ifnextchar
  \array{#1}}
\makeatother

\newcommand{\on}[1]{\operatorname{#1}}
\newcommand{\bb}[1]{{\mathbb{#1}}}

\newcommand*\isom{\xrightarrow{\sim}}
\newcommand{\rr}{\bb{R}}
\newcommand{\pair}[1]{\langle#1\rangle}
\newcommand{\divisor}{\operatorname{div}}
\newcommand{\ord}{\operatorname{ord}}

\newcommand{\Car}{\operatorname{Car}}

\newcommand{\abs}[1]{\lvert#1\rvert}

\newcommand{\sing}{^{\textrm{sing}}}
\newcommand{\red}{{\textrm{red}}}

\newcommand{\ov}[1]{\overline{#1}}

\newcommand{\subpol}{{\rm sp}} 
\newcommand{\ddb}{\partial\bar\partial}
\newcommand{\db}{\bar\partial}

\def\qq{\mathbb{Q}}

\def\rr{\mathbb{R}}
\def\zz{\mathbb{Z}}
\def\cc{\mathbb{C}}

\def\mm{\mathcal{M}}
\def\BB{\mathcal{B}}

\def\deldelbar{\partial \bar{\partial}}
\def\d{\mathrm{d}}

\def\Gr{\mathrm{Gr}}


\def\omar{\omega_{\mathrm{Ar}}}

\theoremstyle{definition}
\newtheorem{definition}{Definition}[section]

\theoremstyle{plain}
\newtheorem{proposition}[definition]{Proposition}
\newtheorem{lemma}[definition]{Lemma}

\newtheorem{theorem}[definition]{Theorem}
\newtheorem{corollary}[definition]{Corollary}

\theoremstyle{remark}
\newtheorem{remark}[definition]{Remark}

\numberwithin{equation}{section}

\author{Jos\'e Ignacio Burgos Gil}
\address{Instituto de Ciencias Matem\'aticas (CSIC-UAM-UCM-UCM3).
  Calle Nicol\'as Ca\-bre\-ra~15, Campus UAM, Cantoblanco, 28049 Madrid,
  Spain.} 
\email{burgos@icmat.es}
\thanks{The first author has been partially supported by the MINECO research projects
  MTM2013-42135-P and MTM2016-79400-P, the ICMAT Severo Ochoa project
  SEV-2015-0554 and
  the DFG project SFB 1085 ``Higher Invariants''. The second and third authors would like to thank the  Instituto de Ciencias Matem\'eticas (ICMAT) for generously hosting us while much of this research was carried out. }

\author{David Holmes}
\address{Mathematical Institute,  
Leiden University,
PO Box 9512, 
2300 RA Leiden, 
The Netherlands}
\email{holmesdst@math.leidenuniv.nl}

\author{Robin de Jong}
\address{Mathematical Institute,  
Leiden University,
PO Box 9512, 
2300 RA Leiden, 
The Netherlands}
\email{rdejong@math.leidenuniv.nl}

\newcounter{nootje}
\setcounter{nootje}{1}



\newcommand{\mat}[1]{\left(\begin{matrix}#1\end{matrix}\right)}

\begin{document}

\title[Positivity of the height jump divisor]{Positivity of the
  height jump divisor}

\begin{abstract}  We study the degeneration of semipositive smooth hermitian line bundles on open complex manifolds, 
assuming that the metric extends well away from a codimension two analytic subset of the boundary. Using terminology introduced by R. Hain, we show that under these assumptions the so-called height jump divisors are always effective. This result is of particular interest in the context of biextension line bundles on Griffiths intermediate jacobian fibrations of polarized variations of Hodge structure of weight $-1$, pulled back along normal function sections. In the case of the normal function on $\mm_g$ associated to the Ceresa cycle, our result proves a conjecture of Hain. As an application of our result we obtain that the Moriwaki divisor on $\ov \mm_g$ has non-negative degree on all complete curves in $\ov \mm_g$ not entirely contained in the locus of irreducible singular curves.
\end{abstract}

\maketitle

\section{Introduction}

Let $X$ be a complex manifold, $E$ a reduced divisor on $X$, and
set $U=X\setminus |E|$.
Let $L$ be a holomorphic line bundle on $U$ equipped with a smooth
hermitian metric $\|\cdot\|$. The purpose of this note is to show a
certain concavity property for the singularities of the metric
$\|\cdot \|$ across the boundary divisor $E$, provided (a) the metric is
semipositive over $U$, and (b) the metric extends continuously away from an
analytic subset $S\subset |E|$  of
codimension at least $2$ in $X$. Here we say that the metric $\|\cdot\|$ is
semipositive if the first Chern form $c_1(\ov L)$ of $\ov L=(L,\|\cdot\|)$ is
semi-positive as a $(1,1)$-form on $U$. A typical
example of an analytic set $S$ as in condition (b) would be the
singular locus of $|E|$. 

To formulate the concavity property it is useful to introduce the notion of a Lear extension, following 
R. Hain \cite[Sections~6 and~14]{hain_normal}, inspired by the results of D. Lear in 
\cite{lear}.  We say that the smooth hermitian line bundle $\ov L$ is \emph{Lear-extendable}
over $X$ if there exists an analytic subset $S \subset |E|$ of
codimension at least $2$ in $X$ and an integer $N \in \zz_{>0}$ such that the smooth hermitian line bundle
$\ov L^{\otimes N}$ extends as a continuously metrized holomorphic line
bundle over $X \setminus S$. If we assume that the metric $\|\cdot\|$ is
semipositive in $U$, then  the assumption that $S$ is an analytic subset of
codimension at least $2$ in $X$  implies
that the holomorphic line bundle $L^{\otimes N}$  extends (uniquely) as a holomorphic line bundle on the
whole of $X$, by \cite[Theorem 1]{schiffman:eplb} and \cite[Proposition
2]{schumacher:ethlb}. 

Thus, if $\ov L$ is Lear-extendable over $X$ and the
metric $\|\cdot\|$ is semipositive on $U$, then the line bundle
$L$ has a canonical extension as a $\qq$-line bundle over
$X$, determined by the chosen metric $\|\cdot\|$ on $L$. We denote this canonical extension by $[\ov L,X]$
and call it the \emph{Lear extension} of $\ov L$. 

Let $C$ be a connected Riemann surface and let $\varphi \colon C \to X$ be a
holomorphic map such that the generic point of $C$ has image in $U$. Let $V
=\varphi^{-1}U$, which is an open dense subset of $C$. If we
assume that $\ov L$ is Lear-extendable over $X$ and in addition that $\varphi|_V^*\ov L$ is Lear-extendable over
$C$, it is natural to consider the difference of $\qq$-line bundles
\[  \varphi^* [\ov L,X] - [\varphi|_V^*\ov L,C] \]
on $C$. Since $\varphi^* [\ov L,X]$ and $[\varphi|_V^*\ov L,C]$ agree on $V$, the difference $\varphi^* [\ov L,X] - [\varphi|_V^*\ov L,C]$ determines canonically a $\qq$-divisor supported on
the boundary set $C \setminus V$, which we write as $J = J_{\varphi,\ov L}$. Following terminology introduced by Hain \cite[Section~14]{hain_normal} 
we call $J$ the \emph{height jump divisor} of $\ov L$ with respect to  
the morphism $\varphi$. We say that \emph{the height jumps} if $J$ is
non-zero. Note that the height jump divisor $J$ measures the failure of the
Lear extension to be compatible with pullback along $\varphi$.  

Our main aim in this note is to prove the following result. 
\begin{theorem} \label{main_intro} 
Assume that the metric of the smooth hermitian holomorphic line bundle $\ov L$ on $U$ is
semipositive. Moreover, assume that $\ov L$ has a Lear extension $[\ov L,X]$ over $X$, and that
$\varphi|_V^*\ov L$ has a Lear extension $[\varphi|_V^*\ov L,C]$ over
$C$.  Then the height jump divisor $J_{\varphi,\ov L}$ on $C$ is
effective. 
\end{theorem}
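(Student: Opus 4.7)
The plan is to reduce to a local analysis near each boundary point $p \in C \setminus V$ and show that the coefficient of $[p]$ in the $\qq$-divisor $J = J_{\varphi, \ov L}$ is non-negative; since $C$ is a Riemann surface the boundary is discrete, so this pointwise statement implies effectivity. Fix such a $p$ and choose a local coordinate $t$ on $C$ centered at $p$, together with a polydisc neighborhood $W$ of $\varphi(p)$ in $X$ with coordinates $(z_1,\dots,z_n)$ such that $|E| \cap W = \{z_1 \cdots z_k = 0\}$ and $S \cap W \subseteq \{\varphi(p)\}$. Choose $N \in \zz_{>0}$ large enough that both $\ov L^{\otimes N}$ extends to a continuously metrized line bundle on $X \setminus S$ and $(\varphi|_V^*\ov L)^{\otimes N}$ extends to one on $C$. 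Shrinking $W$, pick a frame $\tau$ of $[\ov L,X]^{\otimes N}$ on $W$ with $\|\tau\|^2 = e^{-N\psi}$ for $\psi$ continuous on $W \setminus S$, and a frame $\tau'$ of $[\varphi|_V^*\ov L,C]^{\otimes N}$ near $p$ with $\|\tau'\|^2 = e^{-Ng}$ for $g$ continuous near $p$.

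On $V \cap \varphi^{-1}(W)$ the two sections $\varphi^*\tau$ and $\tau'$ are nowhere-zero frames of the same line bundle, so $\tau' = h \cdot \varphi^*\tau$ for a holomorphic nowhere-zero function $h$, and $h$ extends meromorphically to a neighborhood of $p$. Writing $\beta = \ord_p(h) \in \zz$, a comparison of divisors of the common rational section $\varphi^*\tau$ in the two $\qq$-line bundles shows that the coefficient of $[p]$ in $J$ equals $\beta/N$. On the other hand, the identity $\|\tau'\|^2 = |h|^2 \|\varphi^*\tau\|^2$ gives $g = \varphi^*\psi - \tfrac{2}{N}\log|h|$; since $g$ is continuous at $p$ and $\log|h| = \beta\log|t| + O(1)$, we obtain
\[
\varphi^*\psi(t) \;=\; \tfrac{2\beta}{N}\log|t| \;+\; O(1) \qquad (t \to 0).
\]
Effectivity of $J$ at $p$ is thus equivalent to non-negativity of the coefficient of $\log|t|$ in this expansion.

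For the latter we invoke pluripotential theory. Semipositivity of $\|\cdot\|$ on $U$ means that any local potential $-\log\|s\|^2$ is plurisubharmonic on $U$; our $\psi$ differs from such a potential by the real part of the logarithm of a holomorphic nowhere-zero function on $U$, hence $\psi$ is plurisubharmonic on $U \cap W$. The continuity of $\psi$ on $W \setminus S$ combined with the standard removable-singularity theorem for plurisubharmonic functions (applied to the analytic subset $(|E| \cap W) \setminus S$ of $W \setminus S$) promotes $\psi$ to a plurisubharmonic function on $W \setminus S$. Since $S$ has codimension $\geq 2$ in $X$, the classical extension-across-thin-sets theorem for plurisubharmonic functions furnishes a unique plurisubharmonic extension $\tilde\psi$ of $\psi$ to all of $W$. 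Then $\varphi^*\tilde\psi$ is subharmonic on a neighborhood of $p$ in $C$ (it is not identically $-\infty$, because $\tilde\psi = \psi$ is finite and continuous on the dense open $W \setminus S$) and agrees with $\varphi^*\psi$ on the punctured neighborhood. If the coefficient $2\beta/N$ in the above expansion were strictly negative, $\varphi^*\tilde\psi(t)$ would tend to $+\infty$ as $t \to 0$, contradicting the upper semicontinuity of $\varphi^*\tilde\psi$ at $p$. Hence $\beta/N \geq 0$ and the coefficient of $[p]$ in $J$ is non-negative, which yields effectivity.

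The main technical ingredients are the two pluripotential-theoretic extension results (promoting a continuous plurisubharmonic-on-the-complement function to a plurisubharmonic function, and extending a plurisubharmonic function across a codimension-$\geq 2$ analytic subset), together with the careful local bookkeeping that identifies the coefficient of $[p]$ in $J$ with the $\log|t|$-coefficient in the asymptotic expansion of $\varphi^*\psi$. Once these are in place, the conclusion reduces to the upper semicontinuity of subharmonic functions.
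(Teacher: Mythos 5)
Your proof is correct in substance but follows a genuinely different route from the paper's. The paper deduces Theorem \ref{main_intro} from a more general statement (Theorem \ref{thm:1}, for subpolynomial Lear extensions of metrized $\rr$-divisors) proved by an integral/residue argument: one deforms the test disc generically so that it meets $E$ transversally away from the singular locus, applies the local residue formula of Lemma \ref{lemm:1} (a local version of \cite[Lemma XI.9.17]{ACH}) to get $0\le \int_{\partial B_{\varepsilon}(p)}\varphi_{\delta}^{\ast}\eta+\sum_i m_ia_i+m$, lets $\delta\to 0$, and identifies $\lim_{\varepsilon\to 0}\int_{\partial B_{\varepsilon}(p)}\varphi^{\ast}\eta$ with $-a_C-m$ to conclude $a_C\le\sum_i m_ia_i$. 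You replace all of this with soft pluripotential theory: the two Grauert--Remmert extension theorems (across an analytic set, given local boundedness above and continuity; and across a codimension-$\geq 2$ set, unconditionally) plus upper semicontinuity of the pulled-back potential. This is arguably cleaner for Theorem \ref{main_intro} as stated, where the extended metric is continuous off $S$, but it does not obviously cover the subpolynomial setting of Theorem \ref{thm:1} (there the potential is not continuous across $|E|\setminus S$, only controlled up to $a\log|f|$ plus a subpolynomial error), and it does not yield the integral formula $\int_U c_1(\ov D)=\deg\,[\ov D,X]$ of Corollary \ref{cor:2}, which the paper obtains from the same lemma.

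One step you assert without justification: that $h$ extends meromorphically to $p$. Two extensions of a line bundle across a puncture, identified over the punctured disc, need not have meromorphic transition function (consider $e^{1/t}$), so this is not formal. Either you treat it as part of the well-definedness of $J_{\varphi,\ov L}$ as a divisor (which the theorem statement presupposes), or---better---you can extract it from your own construction: once $\tilde\psi$ is plurisubharmonic on $W$, the function $\varphi^{\ast}\tilde\psi$ is subharmonic near $p$, hence bounded above on a neighbourhood of $p$, so $|h|=e^{N(\varphi^{\ast}\psi-g)/2}$ is bounded near $p$; Riemann's removable singularity theorem then makes $h$ holomorphic at $p$ and gives $\beta=\ord_p(h)\geq 0$ outright, short-circuiting your final contradiction argument. (Also, one cannot in general arrange $S\cap W\subseteq\{\varphi(p)\}$ when $S$ is positive-dimensional; what you actually use, and what does hold after shrinking, is $\varphi(\Delta^{\ast})\subseteq W\cap U\subseteq W\setminus S$.)
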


The main motivation for this result comes from work of Hain \cite{hain_normal}, G. Pearlstein \cite{pearldiff} and G. Pearlstein and C. Peters \cite{pearlpeters} concerning the  curvature properties
and asymptotic behavior of admissible biextension variations of mixed
Hodge structure, in particular a conjecture of Hain formulated in \cite[Section~14]{hain_normal}.

We briefly sketch this circle of ideas, referring to \cite{hain_normal} for more details.
Let $\bb{V}$ denote a polarized variation of Hodge
structure of weight $-1$ over the complex manifold $U=X\setminus |E|$. 
By classical work of P. Griffiths  one has associated  to $\bb{V}$ a canonical torus bundle $J(\bb{V}) \to U$, called the
intermediate jacobian fibration of $\bb{V}$ over $U$. As is explained in \cite{hainbiext}, \cite{hain_normal}
the polarization on $\bb{V}$ gives rise to a canonical holomorphic line bundle
$\BB$ on $J(\bb{V})$ whose total space classifies self-dual biextension variations of mixed Hodge structures of $\bb{V}$. A self-dual biextension variation of mixed Hodge structures of $\bb{V}$ is a variation of mixed Hodge structures $\bb{B}$ over $U$ with weight graded quotients
\[ \Gr_0 \cong \zz(0) \, , \quad \Gr_{-1} \cong \bb{V} \, , \quad \Gr_{-2} \cong \zz(1) \]
such that the resulting extensions $W_0\bb{B}/\zz(1) \in 
\mathrm{Ext}^1_{\mathrm{MHS}(U)}(\zz(0),\bb{V})$ and $W_{-1}\bb{B} \in \mathrm{Ext}^1_{\mathrm{MHS}(U)}(\bb{V},\zz(1))$ are identified via the canonical map 
\[  \mathrm{Ext}^1_{\mathrm{MHS}(U)}(\zz(0),\bb{V}) =J(\bb{V})(U) \longrightarrow \check{J}(\bb{V})(U)=\mathrm{Ext}^1_{\mathrm{MHS}(U)}(\bb{V},\zz(1)) \]
determined by the polarization of $\bb{V}$. The biextension line bundle $\BB$ is equipped with a
canonical smooth hermitian metric $\|\cdot\|$, and we write $\ov \BB = 
(\BB,\|\cdot\|)$.

Let $\nu \colon U \to J(\bb{V})$ be a holomorphic section of the torus bundle $J(\bb{V}) \to U$. Then any nowhere vanishing
holomorphic section $s$ of $\nu^*\BB$ can be seen as a self-dual
biextension variation of mixed Hodge structure of $\bb{V}$ on $U$.  We call 
\[ h = -\log \| s \| \]
the \emph{height} of (the biextension variation) $s$. Its Levi form
\[ \frac{i}{\pi} \,\deldelbar \,h \]
coincides with the first Chern form $c_1(\nu^*\ov \BB)$ of the smooth hermitian line bundle $\nu^*\ov \BB$ on $U$. 

We have the following two important results.
\begin{theorem} \label{semipositive} (Hain \cite[Theorem~13.1]{hain_normal}, Pearlstein and Peters \cite[Theorem~8.2]{pearlpeters}, C.~Schnell \cite[Lemma~2.2]{sc})
The first Chern form  $c_1(\nu^*\ov \BB)$ is semipositive on $U$.
\end{theorem}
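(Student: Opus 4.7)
My plan is to compute $c_1(\nu^*\ov\BB) = \frac{i}{\pi}\ddb h$ directly in local coordinates on $U$, and to exhibit the result as a manifestly semipositive $(1,1)$-form via the Hodge--Riemann bilinear relations on the polarized weight $-1$ Hodge structure $\bb V$.

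First I will restrict to a contractible open $W \subseteq U$ over which the local system $\bb V_\zz$ trivializes and the Hodge bundle $F^0 \bb V_\cc$ admits a holomorphic frame; after shrinking if necessary, I will choose a holomorphic lift $\wt\nu \colon W \to \bb V_\cc$ of $\nu$. A nowhere vanishing holomorphic section $s$ of $\nu^* \BB$ on $W$ corresponds to a self-dual biextension variation $\bb B$ with graded quotients $\zz(0), \bb V, \zz(1)$ whose two nontrivial extension classes are both represented by $\wt\nu$. Unwinding the definition of the canonical biextension metric on $\BB$ (via \cite{hainbiext}) yields a formula of the shape
\[
h \;=\; \tfrac{1}{2}\, Q_{\bb V}\bigl(\wt\nu^{-},\, \overline{\wt\nu^{-}}\bigr) \;+\; \Re\bigl(\text{holomorphic}\bigr),
\]
where $\wt\nu^{-}$ is the projection of $\wt\nu$ to $\overline{F^0 \bb V_\cc}$ along the Hodge decomposition $\bb V_\cc = F^0 \bb V_\cc \oplus \overline{F^0 \bb V_\cc}$ and the holomorphic remainder is pluriharmonic, hence killed by $\ddb$.

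Next I will compute $\ddb h$. Writing $\wt\nu = \alpha + \beta$ with $\alpha \in C^\infty(W, F^0 \bb V_\cc)$ and $\beta = \wt\nu^{-} \in C^\infty(W, \overline{F^0 \bb V_\cc})$, the holomorphicity of $\wt\nu$ forces $\bar\partial\alpha = -\bar\partial\beta$. Griffiths transversality identifies $\bar\partial\beta$ with a specific application of the (conjugate) Kodaira--Spencer tensor of $\bb V$. A short bookkeeping computation, using the antisymmetry of $Q_{\bb V}$ to cancel the cross-terms arising from $\partial\alpha$ and $\bar\partial\beta$, should reduce the identity to
\[
\ddb h \;=\; -\,i\, Q_{\bb V}\bigl(\bar\partial\beta,\, \overline{\bar\partial\beta}\bigr),
\]
a $(1,1)$-form whose coefficients are $Q_{\bb V}$-pairings of vectors in $\overline{F^0 \bb V_\cc} = V^{0,-1}$ against their conjugates in $V^{-1,0}$.

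The last step is to invoke the Hodge--Riemann bilinear relations for polarized weight $-1$ Hodge structures, which state exactly that the Hermitian form $v \mapsto -i\, Q_{\bb V}(v,\bar v)$ is positive definite on $V^{0,-1}$. Each coefficient in the displayed expression is then nonnegative, so $\ddb h$, and therefore $c_1(\nu^* \ov \BB)$, is semipositive on $W$; since positivity of a $(1,1)$-form is local, this proves the statement on all of $U$. I expect the main obstacle to be the curvature computation itself: the precise biextension-metric formula must be pinned down (with all of its Tate-twist factors of $2\pi i$), and the cancellations between mixed derivatives of $\alpha$ and $\beta$ must be tracked carefully enough that only the single manifestly Hodge-squared term $-iQ_{\bb V}(\bar\partial\beta,\overline{\bar\partial\beta})$ survives. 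Once that bookkeeping is carried out, the positivity conclusion is immediate from Hodge--Riemann.
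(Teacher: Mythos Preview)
The paper does not give its own proof of this statement: Theorem~\ref{semipositive} is quoted as a known result and attributed to Hain, Pearlstein--Peters, and Schnell, and is then used as a black box (together with Theorem~\ref{lear_exists}) to feed into Theorem~\ref{main_intro}. So there is no in-paper argument to compare against.

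That said, your outline is essentially the argument one finds in the cited sources, in particular in Hain's and in Schnell's treatments: trivialize locally, pick a holomorphic lift $\wt\nu$ of the normal function, express the biextension height $h$ as (up to a pluriharmonic term) the Hodge norm of the $\overline{F^0}$-component of $\wt\nu$, compute $\partial\bar\partial h$, and conclude by the Hodge--Riemann bilinear relations for a polarized weight $-1$ structure. The strategy is correct and standard. The genuine work, as you anticipate, is the curvature bookkeeping: the cancellation you describe does occur, but it relies not only on the antisymmetry of $Q_{\bb V}$ but also on the fact that $Q_{\bb V}$ is flat (so that derivatives pass through it) and on Griffiths transversality to control where $\partial\alpha$ and $\bar\partial\beta$ land in the Hodge filtration; without those two inputs the mixed terms do not drop out. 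If you carry out that computation carefully, with the sign and $2\pi i$ conventions fixed once and for all, you recover exactly the argument in the references the paper cites.
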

\begin{theorem} \label{lear_exists} (Lear \cite[Theorem~6.6]{lear}, Hain \cite[Theorem~6.1]{hain_normal}, Pearlstein \cite[Theorem~5.19]{pearldiff}) 
Assume that $E$ is a reduced normal crossings divisor, that the variation of Hodge structure $\bb{V}$ is admissible, and that $\nu$ is an admissible normal function. Then the smooth hermitian line bundle $\nu^*\ov \BB$ is Lear-extendable over $X$. In fact, a positive tensor power of $\nu^*\ov \BB$ has a continuous extension over $X \setminus |E|^{\mathrm{sing}}$.
 \end{theorem}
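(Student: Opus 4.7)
The statement is local on $X$, so I would work in an analytic polydisc $U_0 \cong \Delta^n$ centred at a smooth point of $|E|$ with $E|_{U_0} = \{t_1 \cdots t_k = 0\}$. Exhibiting a continuous extension of some tensor power of $\nu^*\ov\BB$ over $U_0 \setminus |E|\sing$ reduces to establishing an asymptotic expansion
\[ h = -\log\|s\| = \sum_{j=1}^{k} \alpha_j \log|t_j|^{-1} + h_{\mathrm{cont}} \]
for a local nowhere-vanishing holomorphic section $s$ of $\nu^*\BB$, with $\alpha_j \in \qq$ and $h_{\mathrm{cont}}$ continuous on $U_0 \setminus |E|\sing$. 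Granted such an expansion, choosing $N$ that clears the denominators of the $\alpha_j$ and twisting $s^{\otimes N}$ by the meromorphic section $\prod_j t_j^{N\alpha_j}$ of $\oo(\sum_j N\alpha_j E_j|_{U_0})$ yields a section whose metric is continuous and bounded away from $0$ on $U_0 \setminus |E|\sing$; this is precisely the data of a continuous extension of $(\nu^*\ov\BB)^{\otimes N}$ there.

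\textbf{Input from admissibility.} Admissibility of $\bb{V}$ delivers Deligne's canonical extension $\ca{V}_e$ of $\ca{V} = \bb{V}\otimes\oo_U$, together with nilpotent logarithms $N_j$ of the unipotent parts of the monodromy operators around the $E_j$ and Schmid's limiting mixed Hodge structure; the several-variable compatibility is controlled by Cattani--Kaplan--Schmid. Admissibility of the normal function $\nu$, in the sense of Saito and Kashiwara, guarantees that $\nu$ underlies an admissible biextension variation $\bb{B}$ on $U$: concretely, $\nu$ lifts to a multivalued holomorphic section of $\ca{V}_e$ whose interaction with the $N_j$ and with the weight filtration on the biextension is controlled. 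From this data one extracts the rational numbers $\alpha_j$ that are to measure the jump of the height across each component $E_j$; their rationality is forced by the rational structure of the limiting mixed Hodge structure together with the integrality of the local system underlying $\bb{B}$.

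\textbf{Asymptotics and the main obstacle.} The biextension metric on $\BB$ admits an explicit Hodge-theoretic description through the polarization of $\bb{V}$ and the pairing on $\bb{B}$. Evaluating this description on the canonical extension and invoking the several-variable $SL_2$-orbit theorem of Cattani--Kaplan--Schmid, in the form extended to admissible biextensions by Pearlstein, yields
\[ \|s(t)\|^2 = \Bigl(\prod_{j=1}^{k} |t_j|^{2\alpha_j}\Bigr) \cdot g(t_1, \ldots, t_n) \]
with $g$ positive and continuous on $U_0 \setminus |E|\sing$; taking logarithms produces the required expansion of $h$. The hardest step is exactly this asymptotic analysis: one needs enough uniform control on the biextension period map to upgrade a bound of the above shape, with $g$ merely of polynomial growth in $\log|t_j|$, to genuine continuity of $g$ across each codimension-$1$ boundary stratum. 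The restriction to the complement of $|E|\sing$ is essential, because the argument reduces near such a stratum to a one-variable problem (the original setting of Lear), whereas along codimension-$2$ strata the residual factor $g$ can fail to be continuous — this is precisely the height-jump phenomenon analysed in the rest of the paper.
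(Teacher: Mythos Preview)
The paper does not supply its own proof of this theorem: it is quoted as a result from the literature, with explicit references to Lear's thesis, Hain's survey, and Pearlstein's paper, and no argument is given beyond those citations. So there is no ``paper's own proof'' against which to compare your proposal.

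That said, your sketch is a faithful outline of the approach actually taken in the cited sources. The reduction to a local asymptotic expansion of $-\log\|s\|$ with rational leading coefficients, the use of admissibility to control the lift of $\nu$ to the canonical extension, and the appeal to the (extended) $SL_2$-orbit theorem to upgrade polynomial-in-$\log|t_j|$ bounds to continuity away from $|E|\sing$ are exactly the ingredients in Pearlstein \cite[Theorem~5.19]{pearldiff}. Your identification of the ``hardest step'' is also accurate: the one-variable case (Lear) gives continuity across smooth boundary points, while the multivariable $SL_2$-orbit machinery is what is needed to handle the tangential directions along a smooth boundary stratum, and the failure at $|E|\sing$ is precisely the height jump. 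If you wanted to turn this into a self-contained proof you would still have to spell out the rationality of the $\alpha_j$ (this comes from the fact that they are computed from the nilpotent monodromy logarithms acting on the integral lattice of the biextension, via the local intersection form) and to make precise which several-variable nilpotent orbit/$SL_2$-orbit statement you are invoking; but as a plan it matches the literature the paper relies on.
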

 \newcommand{\var}{T}
We refer to \cite{pearldiff} for a discussion of admissibility of variations of mixed Hodge structures on $U$, and of normal functions. The admissibility condition on $\bb{V}$ implies that the monodromy operators of $\bb{V}$ about the irreducible components of $E$ are unipotent. A polarized variation of Hodge structures $\bb{V}$ having unipotent monodromy and induced by the cohomology of a projective smooth morphism of algebraic varieties $\var \to U$, and with $X \supset U$ a smooth algebraic variety with $X \setminus U$ a normal crossings divisor, is admissible. In the latter situation, any section $\nu$ of $J(\bb{V}) \to U$ determined by a family of cycles on $\var$ over $U$ via Griffiths's Abel-Jacobi map is an admissible normal function. 

Now as above let $\varphi \colon C \to X$ with $C$ a connected Riemann surface be a
holomorphic map such that the generic point of $C$ has image in $U$, and write $V
=\varphi^{-1}U$.  By combining Theorems \ref{main_intro}, \ref{semipositive} and \ref{lear_exists} we immediately obtain the following result.
\begin{theorem} \label{hain_conj} Assume that $E$ is a reduced normal crossings divisor, that the variation $\bb{V}$ on $U$ is admissible, and that the section $\nu$ is an admissible normal function. Then the height jump divisor $J_{\varphi,\nu^*\ov
  \BB}$ on $C$ determined by the smooth hermitian line bundle $\nu^*\ov \BB$ on $U$ and the map $\varphi$ is effective. 
  \end{theorem}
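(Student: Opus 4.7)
The plan is to deduce Theorem \ref{hain_conj} directly from Theorem \ref{main_intro} applied to the smooth hermitian line bundle $\ov L = \nu^*\ov\BB$ on $U$ together with the map $\varphi \colon C \to X$. What I need to do is verify the three hypotheses of Theorem \ref{main_intro}, namely semipositivity on $U$, Lear-extendability over $X$, and Lear-extendability of $\varphi|_V^*\ov L$ over $C$.

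Semipositivity is immediate from Theorem \ref{semipositive}. Lear-extendability of $\nu^*\ov\BB$ over $X$ is the content of Theorem \ref{lear_exists}: under the hypotheses that $E$ is a reduced normal crossings divisor, $\bb V$ is admissible, and $\nu$ is an admissible normal function, a positive tensor power of $\nu^*\ov\BB$ extends continuously over $X \setminus |E|^{\mathrm{sing}}$, and since $E$ is a normal crossings divisor the analytic set $|E|^{\mathrm{sing}}$ is of codimension at least $2$ in $X$, as required.

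It remains to establish Lear-extendability of $\varphi|_V^*(\nu^*\ov\BB)$ over $C$. Since $\dim C = 1$ the boundary $C \setminus V$ is a discrete set of points, so the admissible analytic subset in the definition of Lear extension is forced to be empty, and what must be shown is that a positive tensor power of $\varphi|_V^*(\nu^*\ov\BB)$ extends as a continuously metrized holomorphic line bundle across every point of $C \setminus V$. I propose to obtain this by applying Theorem \ref{lear_exists} a second time, on the Riemann surface $C$, to the pulled back variation $\varphi|_V^*\bb V$ and the pulled back section $\varphi|_V^*\nu$.

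The step that requires some care, and which I expect to be the main technical point, is to check that admissibility is preserved under pullback: namely that $\varphi|_V^*\bb V$ is again an admissible polarized variation of Hodge structure of weight $-1$ on $V$, and that $\varphi|_V^*\nu$ is again an admissible normal function. This is standard in the theory of admissible variations: the local monodromy of $\varphi|_V^*\bb V$ around a point $p \in C \setminus V$ is a power (determined by the local ramification of $\varphi$ along the components of $E$ passing through $\varphi(p)$) of the local monodromy of $\bb V$ around those components, and therefore is unipotent; admissibility of the normal function is preserved in the same way. With these verifications in place, all hypotheses of Theorem \ref{main_intro} are met, and effectivity of $J_{\varphi,\nu^*\ov\BB}$ follows at once.
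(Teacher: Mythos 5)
Your proposal is correct and follows the same route as the paper, which simply combines Theorems \ref{semipositive}, \ref{lear_exists} and \ref{main_intro} and treats the verification of the hypotheses as immediate. The only detail you add --- that Lear-extendability over $C$ comes from applying Theorem \ref{lear_exists} to the pulled-back variation and normal function, using that admissibility is preserved under pullback (the local monodromy at a point of $C\setminus V$ being a product of powers of commuting unipotent operators) --- is exactly the standard fact the paper leaves implicit.
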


A special case of this result related to the so-called Ceresa cycle in the jacobian of a pointed compact connected Riemann surface of genus $g \geq 2$ was conjectured by Hain \cite[Conjecture~14.5]{hain_normal}. We will elaborate on this special case in Section \ref{sec:applications} below. 

For the special case of sections of families of principally polarized abelian varieties Theorem \ref{hain_conj} was shown, among other things, in our previous paper \cite{bhdj}. In this setting $\BB$ is the classical Poincar\'e bundle. The method of proof in \cite{bhdj} is based on an explicit formula for the metric on $\BB$ together with a precise analysis of its asymptotics. In the even more special case of sections of jacobians, the positivity of the height jump divisor was obtained in \cite{bihdj}, \cite{hdj}. In these two references, the positivity was obtained by establishing the concavity, in a suitable sense, of Green's functions of electric circuits. Note that none of these previous results were sufficient to treat the Ceresa cycle. 

A general interpretation of the height jump phenomenon for admissible biextension variations of mixed Hodge structures has been given by P. Brosnan and G. Pearlstein in terms of a pairing on the intersection cohomology of the underlying pure Hodge structure in \cite{brospearl}. This approach allows for an alternative proof of Theorem \ref{hain_conj}, cf. \cite[Corollary~11]{brospearl}.

We will actually prove a slight extension of Theorem \ref{main_intro} to the case of so-called subpolynomial Lear extensions, cf. Theorem \ref{thm:1} below. Moreover we will work with the slightly more flexible notion of metrized $\bb{R}$-divisors. For example, from a smooth hermitian holomorphic line bundle $\ov L$ we obtain a metrized $\bb{R}$-divisor by taking the divisor and height of a non-zero rational section. We define the subpolynomial Lear extension of a metrized $\bb{R}$-divisor in Section \ref{sec:metrized-r-divisors}. 

The structure of this note is as follows. In Section \ref{sec:prelim}
we introduce terminology and state some preliminary results, including
the key Lemma \ref{lemm:1} which is merely a local version of a lemma
from \cite[Chapter XI]{ACH}.   In Section \ref{sec:main} we state and
prove our main result, from which Theorem \ref{main_intro} immediately
follows. Finally in Section~\ref{sec:applications} we discuss
applications of our main result. We discuss the special case related
to the Ceresa cycle, as well as its ramifications for refined slope
inequalities on $\ov \mm_g$, following
\cite[Section~14]{hain_normal}. Also we discuss an example related to
Arakelov line bundles on families of compact and connected Riemann
surfaces, and the concavity of the local height jump multiplicities.  

\section{Preliminary results} \label{sec:prelim}

\subsection{Metrized $\bb{R}$-divisors}
\label{sec:metrized-r-divisors}

Our purpose is to study extensions of line bundles (or Cartier divisors)
determined by a smooth hermitian metric. Since a metric is an analytic object it seems
better to allow real coefficients for such extensions and not to be
confined to rational coefficients. Now for real coefficients the language
of Cartier divisors and Green functions is more natural than that of
line bundles and metrics, and thus we adapt this language from now on. 
For the theory of divisors on complex
manifolds the reader is referred to \cite{huybrechts:cg}.
In this section, we recall from
\cite{BurgosMoriwakiPhilipponSombra:aptv} the basic definitions
concerning 
metrized $\rr$-divisors. We
start by recalling the geometric theory of $\rr$-divisors, details of which can
be found in 
\cite{Lazarsfeld:posit_I}. 

Let $X$ be a complex manifold.  A Weil divisor on $X$ is a locally finite $\zz$-linear combination of irreducible 
analytic hypersurfaces of $X$. An analytic subset of $X$ is called irreducible if its set of non-singular points is connected. 
Since $X$ is smooth, the concepts of Cartier
and Weil divisor coincide. We denote by $\Car(X)$ the group of
Cartier divisors of $X$. The vector space of {$\rr$-Cartier divisors} of $X$ is
defined as
\begin{displaymath}
\Car(X)_\rr=\Car(X)\otimes_\zz \rr \, .
\end{displaymath}
Thus, an $\rr$-Cartier divisor on $X$ is a formal linear combination
$\sum_{i}\alpha_{i}D_{i}$ with $\alpha _{i}\in \rr$ and $D_{i}\in
\Car(X)$.  From now on $\rr$-Cartier divisors will be called
$\rr$-divisors, while Cartier divisors will be called divisors. The
support of a divisor $D$ is denoted by $|D|$.
An $\rr$-divisor $D$ is said to be \emph{ample} (respectively \emph{effective},
\emph{nef}) if $D$ is an $\bb{R}$-linear combination of ample (respectively
effective, nef) divisors with positive coefficients.

We now introduce metrized divisors and metrized
$\rr$-divisors. 
\begin{definition}\label{def:2}
  Let $D$ be a  Cartier divisor on $X$. A \emph{Green function} for $D$ is a
  smooth function $g\colon X\setminus |D|\to \bb{R}$ such that, for each
  point $p\in X$, there is a neigbourhood $V$ of $p$ and a local
  equation $f$ of $D|_V$ such that $g+\log|f|$ extends to a smooth
  function on $V$. 
  A \emph{metrized divisor} on $X$ is a pair $\overline D=(D,g_{D})$ where $D$ is a
  divisor on $X$ and $g_D$ 
  is a Green function for $D$. The group of metrized divisors on $X$ is denoted by
  $\widehat \Car(X)$.
\end{definition}

\begin{remark}\label{rem:1} Given a holomorphic line bundle $L$ on $X$ provided
  with a smooth hermitian metric $\|\cdot\|$ and a non-zero
  meromorphic section $s$,  
  we obtain a metrized divisor
  \begin{displaymath}
    \widehat \divisor (s)=(\divisor(s),-\log\|s\|) 
  \end{displaymath}
  on $X$.
  However we note that the notions of hermitian line bundle with section and of metrized
  divisor are not exactly the same. For example, if we multiply the section $s$ by
  a non-zero complex number $\zeta$ and the metric by the positive
  real number $|\zeta|^{-1}$, the associated metrized divisor does not change.  
\end{remark}

\begin{definition} \label{def:1} Let $X$ be a complex manifold.  The
  \emph{group of metrized $\rr$-divisors} on $X$ is the
  quotient
  \begin{displaymath}
    \widehat\Car(X)_{\rr}=\left. \widehat \Car(X)\otimes_\zz \rr \right/ \sim
  \end{displaymath}
  where $\sim$ is the equivalence relation given by $ \sum_{i}\alpha
  _{i}\ov D_{i}\sim \sum_{j}\beta _{j}\ov E_{j} $ if and only if $
  \sum_{i}\alpha _{i} D_{i}= \sum_{j}\beta _{j} E_{j} $ and
  there is a dense open subset
  $U$ of $ X$ such that
  \begin{displaymath}
      \sum_{i}\alpha _{i} g_{\ov D_{i}}(p)= \sum_{j}\beta _{j}g_{\ov
        E_{j}} (p)\quad \text{ for all} \, p\in U.
  \end{displaymath}
  Let $D=\sum_i \alpha_i D_i$. Then the function
  $g_{D}=\sum_{i}\alpha _{i} g_{\ov D_{i}}\colon U\to \rr$ is called
  the \emph{Green function} of $\ov D$.
\end{definition}

\begin{definition}\label{def:3} 
The first Chern form of a metrized
  $\bb{R}$-divisor $\ov D=(D,g_D)$ is
  \begin{displaymath}
    c_{1}(\overline D)=\frac{i}{\pi}\,\partial\bar \partial \, g_{D}.
  \end{displaymath}
  This form can be extended to a smooth $(1,1)$-form on  the whole $X$. 
\end{definition}

\begin{remark} For a holomorphic line bundle $L$ on $X$ with smooth
  hermitian metric $\|\cdot\|$ and non-zero
meromorphic section $s$ we have $c_1(\widehat \divisor (s))=c_1(L,\|\cdot\|)$.
\end{remark}

\subsection{Subpolynomial Lear extensions}

In this section we introduce subpolynomial Lear extensions of metrized $\bb{R}$-divisors. This
concept is a mild generalization of the Mumford-Lear extensions introduced
in the context of metrized line bundles in \cite{bkk} and is suggested by \cite[Lemma XI.9.17]{ACH} where it
is proved that, for compact Riemann surfaces, semipositivity and subpolynomial growth
together are enough to compute the degree of a hermitian line bundle using Chern-Weil theory. We will crucially 
use a local version of \cite[Lemma XI.9.17]{ACH} in the proof of our main result.

\begin{definition}\label{def:4} A function $\mu \colon \rr_{>0}\to
  \rr_{>0}$ is called \emph{subpolynomial} if, for all $n\in \rr_{>0}$,
  \begin{equation*}
    \lim_{x \to \infty} \frac{\mu (x)}{x^n} = 0\, . 
  \end{equation*}
\end{definition}

We will use subpolynomial functions to control the growth of our
metrics along the boundary. A typical example of a subpolynomial
function is the logarithm $\mu(x)=\log(1+x)$. 

Let $X$ be a complex manifold of pure dimension $d$, and $E$ a reduced
divisor on $X$.  Write $U = X \setminus |E|$.

\begin{definition}\label{def:5} Let $\ov D=(D,g_{D})$ be a metrized
  $\rr$-divisor on $U$. We say that a \emph{subpolynomial} (\subpol-) \emph{Lear extension} of
  $\ov D$ over 
  $X$ exists if there is an auxiliary metrized $\rr$-divisor $\ov
D_{X}=(D_{X},g_{D_{X}})$ on $X$ with $D_{X}|_{U}=D$ and an analytic
subset $S\subset |E|$
  of codimension at least $2$ in $X$, such that for all $p\in |E|\setminus
  S$ there is an 
  open neighbourhood $V$ of $p$ in $X\setminus S$, a local equation $f$ of $E$, a
  real number $a$ and a subpolynomial function $\mu $ such that the
  inequalities  
\begin{equation}\label{eq:6}
  -\log(\mu(1/|f|))\le g_{D}-g_{D_X}+a\log|f| \le  \log(\mu(1/|f|))
\end{equation}
are satisfied on $V\cap U$. 
\end{definition}

\begin{remark}\label{rem:2} The real
  number $a$ in the previous 
  definition depends on the choice of the auxiliary metrized divisor $\ov
 D_{X}$ and on the point $p$. However by definition the real number $a$ is locally
  constant on $|E|\setminus S$. In particular, if $p,q$ belong to the same
  irreducible component of $E$, then the real numbers associated  to $p,q$ agree. 
\end{remark}

\begin{definition}\label{def:6}
  Let $\ov D=(D,g_{D})$ be a metrized $\rr$-divisor on $U$.
  Let $\{E_{i}\}_{i\in I}$ be the irreducible components of $E$.
  Assume that a \subpol-Lear extension of $\ov D$ over $X$
  exists, with auxiliary metrized divisor $\ov D_X$. By Remark \ref{rem:2} one has well defined real numbers 
  $a_{i}$, $i\in I$, one for each irreducible component  $E_i$ of $E$. We define
  the \emph{\subpol-Lear extension of $\ov D$ over $X$} as the $\bb{R}$-divisor
  \begin{equation}\label{eq:4}
    [\ov D,X]=D_{X}+\sum _{i\in I}a_{i}E_{i}\, .
  \end{equation}
Note that the sum in \eqref{eq:4} is locally finite, as $E$ is a
divisor. 
\end{definition}

\begin{proposition}\label{prop:1} If $\ov D$ has a \subpol-Lear
  extension over $X$, it is unique. 
\end{proposition}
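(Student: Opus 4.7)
The plan is to compare two candidate \subpol-Lear extensions by subtracting the defining inequalities and reading off the coefficient along each irreducible boundary component $E_i$ from the unique linear term in $\log|f_i|$.

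Suppose we are given two \subpol-Lear extensions of $\ov D$ over $X$, coming from auxiliary metrized $\bb{R}$-divisors $\ov D_X^{(1)}=(D_X^{(1)},g_{D_X^{(1)}})$ and $\ov D_X^{(2)}=(D_X^{(2)},g_{D_X^{(2)}})$ on $X$, with associated analytic sets $S^{(k)}\subset|E|$ and locally constant real numbers $a_i^{(k)}$ ($k=1,2$) for each irreducible component $E_i$ of $E$. I want to show the $\bb{R}$-divisors $D_X^{(1)}+\sum_i a_i^{(1)} E_i$ and $D_X^{(2)}+\sum_i a_i^{(2)} E_i$ coincide. Since both $D_X^{(1)}$ and $D_X^{(2)}$ restrict to $D$ on $U=X\setminus|E|$, their difference is supported on $|E|$, so it suffices to check equality of multiplicities along each $E_i$.

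I would fix $i$ and choose a generic point $p\in E_i$ lying outside $S^{(1)}\cup S^{(2)}$ and outside every other irreducible component of $E$. On a small polydisc $V$ around $p$ we have a local equation $f=f_i$ for $E_i$, and the difference $D_X^{(1)}-D_X^{(2)}$ equals $e_iE_i$ as an $\bb{R}$-divisor on $V$ for some $e_i\in\bb{R}$. The associated Green function satisfies
\begin{equation*}
g_{D_X^{(1)}}-g_{D_X^{(2)}}=-e_i\log|f_i|+h
\end{equation*}
for some smooth (hence locally bounded) function $h$ on $V$. Subtracting the two inequalities of type \eqref{eq:6} provided by Definition \ref{def:5}, and using that the product of two subpolynomial functions is subpolynomial, I find on $V\cap U$ an estimate
\begin{equation*}
\bigl|(e_i+a_i^{(1)}-a_i^{(2)})\log|f_i|-h\bigr|\le \log\mu(1/|f_i|)
\end{equation*}
for some subpolynomial $\mu$. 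Since $h$ is bounded on a neighbourhood of $p$ and $\log\mu(1/|f_i|)=o(\log(1/|f_i|))$ as $|f_i|\to 0$ (an immediate consequence of Definition \ref{def:4}), while the left-hand side otherwise grows linearly in $\log(1/|f_i|)$, we are forced to have $e_i+a_i^{(1)}-a_i^{(2)}=0$.

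Unwinding, this means $D_X^{(1)}+a_i^{(1)} E_i$ and $D_X^{(2)}+a_i^{(2)} E_i$ agree as $\bb{R}$-divisors on a neighbourhood of $p$, so the two candidate Lear extensions have the same multiplicity along $E_i$ for every $i$. As their difference is supported on $|E|$ and has zero multiplicity along every irreducible component, it is zero, proving uniqueness. The only place where care is needed is in the elementary growth comparison between subpolynomial functions and $\log$, which is the only step where the hypothesis on $\mu$ is actually used; everything else is bookkeeping about Green functions on a neighbourhood of a generic point of each boundary component.
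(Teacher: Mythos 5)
Your proof is correct and follows essentially the same route as the paper's: both write the difference of the two auxiliary divisors as $\sum_i b_i E_i$, compare Green functions near a generic point of each $E_i$ away from the bad sets, subtract the two subpolynomial bounds, and conclude from the growth comparison $\log\mu(1/|f_i|)=o(\log(1/|f_i|))$ that the coefficient of $\log|f_i|$ must vanish. The only cosmetic difference is that the paper explicitly tracks a possible change of local equation $f'=uf$ and constructs a common dominating subpolynomial function $\mu''$, steps you absorb into the bounded term $h$ and the closure of subpolynomial functions under products.
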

\begin{proof}
  Let $\ov D'_{X}=(D'_{X},g_{D'_{X}})$ and $S'$ be a different choice of
  auxiliary metrized $\rr$-divisor and analytic subset in Definition
  \ref{def:5}. Then there are real numbers
  $\{b_{i}\}_{i\in I}$ such
  that
  \begin{displaymath}
    D'_{X}=D_{X}+\sum_{i\in I}b_{i}E_{i} \, .
  \end{displaymath}
  Without loss of generality we may assume that
  $S=S'$ and that $|E|^{\rm sing}\subset S$.  
  Let $p\in |E|\setminus S$ and $V$ a small enough neighborhood of $p$
  that only intersects a single component $E_{i}$ of $E$. Let
  $f'$ be another choice of local equation for $E$ on $V$. Then
  $f'=uf$ for a non-vanishing holomorphic function $u$ on $V$. Finally
  let $\mu '$ be another choice of subpolynomial function such that
  \begin{equation}\label{eq:5}
    -\log(\mu'(1/|f'|))\le g_{D}-g_{D'_X}+a_{i}'\log|f'| \le  \log(\mu'(1/|f'|))\, . 
  \end{equation}
  After shrinking $V$ if necessary, we can find a third subpolynomial
  function $\mu ''$ such that
  \begin{displaymath}
    0< \log(\mu'(1/|f'|))\le \log(\mu''(1/|f|)) \quad\text{and}\quad
    0<\log(\mu(1/|f|))\le \log(\mu''(1/|f|)) \, .
  \end{displaymath}
  By definition of Green function, there is a smooth function
  $\varphi$ on $V$ such that 
  \begin{displaymath}
    g_{D'_{X}}=g_{D_{X}}-b_{i}\log|f|+\varphi \, .
  \end{displaymath}
  Then, subtracting equation \eqref{eq:6} from equation \eqref{eq:5}
  we deduce
  \begin{displaymath}
      -2\log(\mu''(1/|f|))\le (b_{i}+a_{i}'-a_{i})\log |f|+\log|u|-\varphi \le  2 \log(\mu''(1/|f|))
  \end{displaymath}
  which implies that $a_{i}=a_{i}'+b_{i}$. We conclude
  \begin{displaymath}
    D'_{X}+\sum_{i\in I} a'_{i}E_{i}=D_{X}+\sum_{i\in I}
    (b_{i}+a'_{i})E_{i}= D_{X}+\sum_{i\in I} a_{i}E_{i} \, ,
  \end{displaymath}
  and this proves the result.
\end{proof}

\subsection{Residues of semipositive forms}

We will write $\Delta \subset \cc$ for the closed unit disk, and $\Delta
^{\ast}=\Delta \setminus \{0\}$ for the punctured closed disk.
For  $0<t \leq 1$, we denote by $\Delta _{t}$ the closed
disk of radius $t$.

Let $U$ be a complex manifold and $\omega$ a $(1,1)$-form on $U$. Recall that
$\omega$ is called \emph{semipositive} if for every continuous map 
$\varphi\colon \Delta 
  _{t }\to U$  which is holomorphic on the interior of $\Delta _{t}$, the positivity condition 
  \begin{displaymath}
    \int_{\Delta _{t}}\varphi^{\ast}\omega\ge 0
  \end{displaymath}
holds. 
Fix $0<s\le 1$.
 Let $g$ be a smooth function on an open neighbourhood of $\Delta_{s}^*$
 taking values in $\rr$. We write
\begin{equation*}
\omega \coloneqq \frac{i}{\pi}\ddb g \, , \quad \eta \coloneqq
\frac{i}{\pi }\bar\partial g \, . 
\end{equation*}
The next lemma is a local variant on \cite[Lemma XI.9.17]{ACH}, where a
global version is given. We give the direct proof of the local version
here, following the arguments in \cite{ACH}, since it only takes little
longer than deducing the local case from the global case. Moreover
the result is central in our arguments.   
\begin{lemma} \label{lemm:1}
Suppose that $\omega$ is a semipositive $(1,1)$-form on an open
neighbourhood of $\Delta_{s}^*$, and that there exist $a \in \bb R$ and a
subpolynomial function $\mu $ such that  
\begin{equation}\label{eq:growth_bound_for_residue}
-\log(\mu(1/|z|))\le g(z)+a\log|z| \le \log(\mu(1/|z|)) \, .
\end{equation}
Then $\int_{\Delta_{s}} \omega\coloneqq \lim_{\varepsilon \to 0}\int_{\Delta_{s}
  \setminus \Delta_{\varepsilon}} \omega$ exists in $\bb R$, and
we have  
\begin{equation*}
0 \leq \int_{\Delta_{s}} \omega = \int_{\partial \Delta_{s}} \eta + a \, . 
\end{equation*}
\end{lemma}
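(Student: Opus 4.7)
The plan is to apply Stokes' theorem on the annulus $\Delta_s\setminus \Delta_{\varepsilon}$ and identify the limit of the inner boundary integral as $\varepsilon\to 0$. Since $d\eta = \frac{i}{\pi}(\partial+\bar\partial)\bar\partial g = \omega$, Stokes' theorem gives
\[
\int_{\Delta_s \setminus \Delta_\varepsilon} \omega \;=\; \int_{\partial \Delta_s} \eta - \int_{\partial \Delta_\varepsilon} \eta.
\]
Semipositivity of $\omega$ makes the left-hand side non-decreasing as $\varepsilon\searrow 0$. Consequently, once we show that
\[
\lim_{\varepsilon\to 0}\int_{\partial \Delta_\varepsilon}\eta \;=\; -a,
\]
the existence of the limit $\int_{\Delta_s}\omega$, the identity $\int_{\Delta_s}\omega = \int_{\partial\Delta_s}\eta + a$, and the inequality $\int_{\Delta_s}\omega\ge 0$ all follow at once.

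A direct polar-coordinate computation (write $z=re^{i\theta}$, use $\bar\partial g = \frac{\partial g}{\partial \bar z}\,d\bar z$ with $\frac{\partial}{\partial\bar z} = \frac{e^{i\theta}}{2}(\partial_r + \tfrac{i}{r}\partial_\theta)$ and $d\bar z|_{\partial\Delta_r} = -ire^{-i\theta}d\theta$, then kill the $g_\theta$ term by $2\pi$-periodicity) gives
\[
\int_{\partial \Delta_r}\eta \;=\; \frac{r}{2\pi}\int_0^{2\pi}\frac{\partial g}{\partial r}(re^{i\theta})\,d\theta \;=\; r\,M'(r),
\]
where $M(r):=\frac{1}{2\pi}\int_0^{2\pi} g(re^{i\theta})\,d\theta$ is the circular mean. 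Writing $g(z) = -a\log|z| + h(z)$ with $|h(z)| \le \log\mu(1/|z|)$, and setting $\wt M(r) := \frac{1}{2\pi}\int_0^{2\pi} h(re^{i\theta})\,d\theta$, one has $rM'(r) = -a + r\wt M'(r)$; it therefore suffices to prove $r\wt M'(r)\to 0$ as $r\to 0$.

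Here semipositivity enters essentially. Since $\ddb\log|z|=0$ on $\Delta_s^*$, we have $\omega = \frac{i}{\pi}\ddb h$, so $h$ is subharmonic on a neighbourhood of $\Delta_s^*$. The classical fact that the angular mean of a subharmonic function is convex in $\log r$ shows that $\phi(t):=\wt M(e^t)$ is a smooth convex function on an interval of the form $(-\infty,\log s]$. The hypothesis \eqref{eq:growth_bound_for_residue} translates into $|\phi(t)| \le \log\mu(e^{-t}) = o(|t|)$ as $t\to -\infty$, because $\mu$ is subpolynomial. An elementary lemma then forces $\phi'(t)\to 0$ as $t\to -\infty$: since $\phi'$ is non-decreasing, the limit $L:=\lim_{t\to -\infty}\phi'(t)$ exists in $[-\infty,+\infty)$; if $L>0$ then integrating $\phi'\ge L$ backwards forces $\phi(t)\to -\infty$ linearly in $|t|$, while if $L<0$ (possibly $-\infty$) then integrating $\phi'\le L'<0$ on $(-\infty,t_0]$ forces $\phi(t)\to +\infty$ linearly in $|t|$. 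Either case contradicts the sublinear bound, so $L=0$. Since $r\wt M'(r)=\phi'(\log r)$, this gives $r\wt M'(r)\to 0$, completing the argument.

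The main obstacle is the very last step: the growth bound on $g$ controls the function but not its derivatives, and nothing in a pointwise bound alone would prevent large oscillations of $g_r$ near the origin. It is only through semipositivity---converted into convexity of the smoothed quantity $\phi$---that the subpolynomial growth of $g$ can be upgraded into control over the averaged radial derivative $r\wt M'(r)$, yielding the exact asymptotic $-a$.
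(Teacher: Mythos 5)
Your proof is correct and is essentially the paper's argument in different packaging: your convexity of $\phi(t)=\wt M(e^t)$ is exactly the paper's monotonicity of $F(t)=-\int_{\partial\Delta_t}\eta$ (obtained the same way, from Stokes and semipositivity), and your ``convex function with sublinear growth has derivative tending to $0$'' lemma is proved by the same integration-of-$\phi'$ comparison that the paper carries out directly with $F$ and the bound \eqref{eq:growth_bound_for_residue}. Subtracting $-a\log|z|$ first and phrasing the estimate via circular means of the subharmonic remainder $h$ is a clean normalization, but the underlying mechanism is identical.
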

\begin{proof}
For ease of notation we will treat only the case $s=1$. 
For each $0 < t\le 1$, a computation in polar coordinates yields
\begin{equation*}
\bar\partial g|_{\partial \Delta_t} = \left( -
  \frac{i}{2}r\frac{\partial g}{\partial r} -
  \frac{1}{2}\frac{\partial g}{\partial \theta}\right) \on d \theta. 
\end{equation*}
We define
\begin{equation*}
F(t) = - \int_{\partial \Delta_t} \eta  = \frac{-1}{2\pi}
\int_{\partial \Delta_t} r \frac{\partial g}{\partial r} \on d \theta.  
\end{equation*}
For $t_{1}<t_{2}$ we have using Stokes's theorem
\begin{equation*}
  F(t_{2})-F(t_{1})= - \int_{\partial \Delta_{t_{2}}} \eta+
  \int_{\partial \Delta_{t_{1}}} \eta =-\int_{\Delta_{t_{2}}\setminus
  \Delta_{t_{1}}}\omega \le 0.
\end{equation*}
Thus, the function $F$ is non-increasing.

 Now for any $0 < \varepsilon \le t \le 1$ we compute
\begin{equation*}
\begin{split}
F(t) (\log t - \log \varepsilon) & = \int_\varepsilon^t F(t) \frac{\on d r}{r}\\
& \le \int_\varepsilon^t F(r) \frac{\on d r}{r}\\
& = \frac{-1}{2\pi} \int_\varepsilon^ t \int_0^{2 \pi} \frac{\partial g}{\partial r} \on d r \on d \theta\\
& = \frac{-1}{2\pi}\int_0^{2 \pi} g(te^{i \theta}) - g(\varepsilon e^{i \theta}) \on d \theta. 
\end{split}
\end{equation*}
Fixing $t$ and letting $\varepsilon$ vary we find
\begin{equation*}
-F(t) \log \varepsilon \le  \text{constant} + \frac{1}{2\pi} \int_0^{2 \pi} g(\varepsilon e^{i \theta}) \on d \theta. 
\end{equation*}
Applying the rightmost  bound from formula
\eqref{eq:growth_bound_for_residue} we see that
\begin{equation*}
  -F(t)\log \varepsilon \le \text{constant} - a \log(\varepsilon )+\log
(\mu(1/\varepsilon)).  
\end{equation*}
The fact that $\mu $ is subpolynomial and that, for $\varepsilon <1$,
$-\log \varepsilon > 0$,
implies that $F(t) \le a$. In
particular we find  that the limit
\begin{equation*}
K\coloneqq \lim_{t \to 0} - \int_{\partial \Delta_t} \eta  
\end{equation*}
exists and that  $K\le a$.

Using again that $F$ is non-increasing, we deduce that $-K\le -F(t)$
for all $t$. Thus, for $0<\varepsilon <t\leq1$,
\begin{displaymath}
  -K(\log t-\log \varepsilon )\le \int_\varepsilon^t -F(r) \frac{\on d
    r}{r} = \frac{1}{2\pi}\int_0^{2 \pi} g(te^{i \theta}) -
  g(\varepsilon e^{i \theta}) \on d \theta.  
\end{displaymath}
Fixing $t$ and applying the leftmost  bound from formula
\eqref{eq:growth_bound_for_residue} we obtain
\begin{displaymath}
  K\log \varepsilon \le \text{constant} + a \log(\varepsilon )+\log
(\mu(1/\varepsilon)) \, .
\end{displaymath}
From this we deduce that $K\ge a$, so that in fact $K=a$, and proving in
particular that 
\begin{equation} \label{limit_to_zero} \lim_{t \to 0} \int_{\partial \Delta_{t}} \eta + a = 0  \, . 
\end{equation}
Finally we apply Stokes's Theorem to deduce
\begin{equation*}
0\leq \int_\Delta \omega = \lim_{t \to 0} \int_{\Delta \setminus \Delta_t}
\omega = \int_{\partial \Delta} \eta - \lim_{t \to 0} \int_{\partial
  \Delta_t} \eta = \int_{\partial \Delta} \eta + a.
\end{equation*}
This proves the lemma.
\end{proof}

\begin{corollary}\label{cor:1}
  Let $R=\{z_{1},\dots,z_{k}\}\subset \Delta _{s}$ be a finite set of points with
  $|z_{i}|<s$, and let $g$ be a smooth function on an open neighborhood of
  $\Delta _{s}\setminus R$ such that $\omega =\frac{i}{\pi} \ddb g $
  is semipositive, and such that there is a subpolynomial function $\mu $ and real
  numbers $a_{i}$ such that 
  \begin{displaymath}
    -\log(\mu(1/|z-z_{i}|))\le g(z)+a_i\log|z-z_{i}| \le
    \log(\mu(1/|z-z_{i}|))
  \end{displaymath}
  when $z$ is close to $z_{i}$. Then the (in)equalities
  \begin{displaymath}
    0 \leq \int_{\Delta_{s}} \omega = \int_{\partial \Delta_{s}} \eta + \sum_{i=1}^{k}a_{i} 
  \end{displaymath}
hold, where $\eta =\frac{i}{\pi }\bar\partial g $.
\end{corollary}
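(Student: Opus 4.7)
The plan is to reduce to Lemma \ref{lemm:1} applied in a small neighbourhood of each singular point $z_i$, together with an application of Stokes's theorem on the complement where $g$ is smooth. I interpret $\int_{\Delta_s}\omega$ as the limit $\lim_{\varepsilon\to 0}\int_{\Delta_s\setminus\bigcup_i \Delta_i(\varepsilon)^\circ}\omega$, where $\Delta_i(\varepsilon) = \{z : |z-z_i|\le \varepsilon\}$, and $\varepsilon$ is taken small enough that these closed disks are pairwise disjoint and contained in the interior of $\Delta_s$.

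First I would apply Stokes's theorem on the smooth domain $\Delta_s\setminus\bigcup_i\Delta_i(\varepsilon)^\circ$, on which $g$ is smooth, to get
\begin{equation*}
  \int_{\Delta_s\setminus\bigcup_i\Delta_i(\varepsilon)^\circ}\omega
  = \int_{\partial\Delta_s}\eta - \sum_{i=1}^k \int_{\partial\Delta_i(\varepsilon)}\eta.
\end{equation*}
Next, for each $z_i$, I would apply Lemma \ref{lemm:1} in a disk centred at $z_i$, after the obvious translation $z\mapsto z - z_i$; the hypotheses transfer directly since both the semipositivity of $\omega$ and the subpolynomial two-sided bound hold locally near $z_i$ by assumption. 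The key output I need is not the equality in the statement of Lemma \ref{lemm:1} itself but its internal identity \eqref{limit_to_zero}, which gives
\begin{equation*}
  \lim_{\varepsilon\to 0}\int_{\partial\Delta_i(\varepsilon)}\eta = -a_i
\end{equation*}
for each $i$. Combining this with the Stokes identity above yields
\begin{equation*}
  \lim_{\varepsilon\to 0}\int_{\Delta_s\setminus\bigcup_i\Delta_i(\varepsilon)^\circ}\omega
  = \int_{\partial\Delta_s}\eta + \sum_{i=1}^k a_i,
\end{equation*}
so the limit defining $\int_{\Delta_s}\omega$ exists and equals the right-hand side of the desired formula. Since $\omega$ is semipositive the integrals on the left are non-negative (and in fact non-decreasing as $\varepsilon\to 0$), giving $\int_{\Delta_s}\omega\ge 0$.

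There is no real obstacle here; the corollary is a straightforward several-point version of Lemma \ref{lemm:1}, and the only point that requires mild care is to extract the limit \eqref{limit_to_zero} from the proof of Lemma \ref{lemm:1} rather than just the final equality in its statement, since we want to separate the contribution of each individual singularity $z_i$ before summing them up against the boundary term on $\partial\Delta_s$.
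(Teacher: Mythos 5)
Your proof is correct and follows the same route as the paper's (one-sentence) proof: excise small closed disks around the $z_i$, apply Lemma \ref{lemm:1} to each, and use Stokes's theorem on the complement. The only cosmetic difference is that you extract the internal identity \eqref{limit_to_zero} and pass to the limit $\varepsilon\to 0$, whereas one can simply apply the full statement of Lemma \ref{lemm:1} to each fixed excised disk $\Delta_i(\varepsilon)$ and add the resulting equalities $\int_{\Delta_i(\varepsilon)}\omega=\int_{\partial\Delta_i(\varepsilon)}\eta+a_i$ to the Stokes identity on the complement, which yields the conclusion with no further limiting argument at the singular points.
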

\begin{proof}
  It is enough to cut  out a small disc around each point $z_{i}$, contained
  in the interior of $\Delta _{s}$, to apply
  Lemma \ref{lemm:1} to each of these discs, and to apply Stokes's theorem to the
  complement of the discs.
\end{proof}
A metrized $\rr$-divisor $\ov D=(D,g_D)$ on $U$ is called semipositive if $c_{1}(\ov
D)$ is a semipositive $(1,1)$-form on $U$.
\begin{corollary} \label{cor:2} Assume that $X$ is a Riemann surface. Let $\ov D$ be a metrized $\rr$-divisor on $U=X\setminus |E|$. Assume that $\ov D$ is semipositive and that $\ov D$ has a \subpol-Lear extension over $X$. Then
$c_1(\ov D)$ is locally integrable on $X$. If $X$ is compact, then the (in)equalities
\[ 0\leq \int_U c_1(\ov D) = \deg\,[\overline D,X] \]
hold.
\end{corollary}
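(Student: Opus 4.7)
The plan is to reduce everything to a direct application of Corollary \ref{cor:1} at each point of $|E|$ together with an application of Lemma \ref{lemm:1} at each log-singularity of $g_D$ in $U$, and then to glue the local contributions by Stokes's theorem on the complement. Since $X$ is a Riemann surface, the codimension-two analytic subset $S$ appearing in Definition \ref{def:5} is forced to be empty, so $|E|$ is a discrete set of points and the \subpol-Lear extension inequality \eqref{eq:6} holds in a punctured neighborhood of every $p \in |E|$.

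First I would verify the hypotheses of Corollary \ref{cor:1} near a point $p \in |E|$. Choosing a coordinate $z$ in which $z$ is a local equation for $E$ at $p$, and writing $g_{D_X} = -m_p \log|z| + \varphi$ with $\varphi$ smooth and $m_p$ the multiplicity of $D_X$ at $p$, the bound \eqref{eq:6} rearranges to $|g_D + (m_p + a_p)\log|z| - \varphi| \le \log\mu(1/|z|)$. Absorbing the bounded $\varphi$ into a new subpolynomial function gives exactly the shape of bound required by Corollary \ref{cor:1}, with coefficient $a = m_p + a_p$ --- which is precisely the multiplicity of $\{p\}$ in the Lear extension $[\ov D,X]$. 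Shrinking a disk $\Delta_s$ around $p$ so it avoids the other points of $|D_X|$, Corollary \ref{cor:1} yields local integrability of $c_1(\ov D)$ near $p$ together with
\[
0 \le \int_{\Delta_s} c_1(\ov D) = \int_{\partial \Delta_s} \tfrac{i}{\pi}\bar\partial g_D + (m_p + a_p).
\]
At each remaining singularity $q \in |D| \cap U$, the function $g_D$ has the form $-n_q \log|z - q| + \text{smooth}$, and Lemma \ref{lemm:1} (with $\mu$ constant) gives the analogous non-negative identity with $a = n_q$. Everywhere else $c_1(\ov D)$ is smooth, so local integrability on all of $X$ follows.

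For the global identity, assume $X$ is compact and enumerate the finite sets $|E| = \{p_i\}$ and $|D| \cap U = \{q_j\}$. Choose pairwise disjoint small coordinate disks $\Delta_i$ around the $p_i$ and $\Delta'_j$ around the $q_j$ so the preceding analysis applies on each. On the compact complement $W = X \setminus \bigcup_i \Delta_i \setminus \bigcup_j \Delta'_j$ the function $g_D$ is smooth and $c_1(\ov D) = \tfrac{i}{\pi}\ddb g_D$ holds as a smooth form, so Stokes's theorem expresses $\int_W c_1(\ov D)$ as a signed sum of boundary integrals $\int_{\partial \Delta_i} \tfrac{i}{\pi}\bar\partial g_D$ and $\int_{\partial \Delta'_j} \tfrac{i}{\pi}\bar\partial g_D$ which, by orientation, cancel the boundary terms appearing in the local identities. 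Summing, $\int_U c_1(\ov D)$ equals $\sum_i (m_{p_i}+a_{p_i}) + \sum_j n_{q_j}$, which is exactly $\deg[\ov D, X]$, and this total is non-negative as a sum of the non-negative contributions produced by Corollary \ref{cor:1} and Lemma \ref{lemm:1}.

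The main obstacle, such as it is, lies in the first step: translating the inequality \eqref{eq:6}, which controls $g_D - g_{D_X}$, into the bound on $g_D$ itself of the precise shape required by Corollary \ref{cor:1}, and confirming that the resulting constant is indeed the correct multiplicity of $[\ov D, X]$ at $p$. Once this bookkeeping is in place, the remainder of the argument is a routine Stokes-type calculation.
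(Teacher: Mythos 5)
Your proof is correct and follows essentially the same route as the paper: apply Lemma \ref{lemm:1} (equivalently its consequence \eqref{limit_to_zero}) on small punctured disks around each boundary point to get local integrability and the local residue identity, then glue with Stokes's theorem on the complement. You merely spell out the bookkeeping that the paper leaves implicit, namely rewriting \eqref{eq:6} as a bound on $g_D$ with constant $m_p+a_p$ equal to the multiplicity of $[\ov D,X]$ at $p$, and handling the additional $\eta$-singularities at $|D|\cap U$, both of which check out.
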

\begin{proof} We write 
\begin{displaymath}
  \omega =\frac{i}{\pi }\ddb g_{D}\, , \qquad \eta= \frac{i}{\pi }\db g_{D}
\end{displaymath}
on $U$ as before.
The local integrability of $c_1(\ov D)=\omega$ follows directly from Lemma \ref{lemm:1}. Assume that $X$ is compact. The bound $\int_U c_1(\ov D) \ge 0$ is then clear. Let $p \in |E|$.  Applying the result found in equation (\ref{limit_to_zero})  to small disks $\Delta_\varepsilon$ around $p$ we find that
\[ -\lim_{\varepsilon \to 0} \int_{\partial \Delta_\varepsilon} \eta = a  \, , \]
where $a$ is the local contribution at $p$ to $\deg  \,[\overline D,X]$. The usual argument using Stokes's theorem then gives the stated equality.\end{proof}
\begin{remark} Let $U$ be a connected Riemann surface with smooth compactification $X$, and let $\bb{V}$ be an admissible polarized variation of Hodge structures of weight $-1$ as in the Introduction. Let $\nu \colon U \to J(\bb{V})$ be an admissible normal function and denote by $\ov \BB$ the canonically metrized biextension line bundle on $J(\bb{V})$. Using Theorems \ref{semipositive} and \ref{lear_exists}, as a special case of Corollary \ref{cor:2} we obtain the local integrability of the $(1,1)$-form $ c_1(\nu^*\ov \BB)$, together with the equality
\[ \int_U c_1( \nu^*\ov \BB) = \deg \,[\nu^*\ov \BB,X  ] \, . \]
This result proves \cite[Conjecture~6.6]{hain_normal}. That \cite[Conjecture~6.6]{hain_normal} holds has previously been noted by Pearlstein in \cite[Corollary 3.2]{pe_let} and by Pearlstein and Peters in \cite[Section~1.5.6]{pearlpeters}.
\end{remark}

 \section{Positivity of the height jump divisor}
\label{sec:main}

Let $X$ be a complex manifold, $E$ a reduced divisor on $X$
and $U=X\setminus |E|$.  Let $\ov D=(D,g_{D})$ be a metrized
$\rr$-divisor on $U$. Let $C$ be a connected Riemann surface and $\varphi\colon
C\to X$ a holomorphic map 
 such that the image of the generic point of $C$ lies in
$U\setminus |E|$. Write $V=\varphi^{-1}(U)$.  
We have a well defined $\rr$-divisor $\varphi|_V^* D$ on $V$ and a Green
function $\varphi|_V^*g_{D}=g_{D}\circ \varphi|_V$ giving a metrized $\rr$-divisor
$\varphi|_V^*\ov D=(\varphi|_V^* D,\varphi|_V^*g_{D})$ on $V$. 

\begin{definition} \label{def:7}
Let $X$, $U$, $\ov D$ and $\varphi \colon C \to X$ be as above. 
Assume that $\ov D$ admits a
$\subpol$-Lear extension $[\ov D,X]$ to $X$. Assume furthermore that
the metrized divisor 
$\varphi|_V^*\ov D$ on $V$ admits a $\subpol$-Lear
extension $[\varphi|_V^*\ov D, C]$ to $C$.
Then the
  \emph{height jump divisor} is defined as the difference
  \begin{displaymath}
    J_{\varphi,\ov D} = \varphi^*[\ov D, X] - [\varphi|_V^*\ov D, C] \, ,
  \end{displaymath}  
which is an $\bb R$-divisor on $C$. 
\end{definition}
In particular, the height jump divisor measures the lack of functoriality of a
\subpol-Lear extension. 
The main result of the present note is the
following result, of which Theorem \ref{main_intro} is an immediate consequence. Note that we
 are neither assuming $X$ nor $C$ to be compact, at this point.

\begin{theorem}\label{thm:1} Assume that the hypotheses of
  Definition \ref{def:7} are satisfied, and that the metrized divisor $\ov D = (D,g_{D})$
  on $U$ is semipositive, that is, the smooth $(1,1)$-form $c_{1}(\ov D)$
  on $U$ is semipositive. Then the height jump divisor $J_{\varphi,\ov D}=\varphi^*[\ov D, X]
  - [\varphi|_V^*\ov D, C]$ is an effective $\rr$-divisor on $C$. 
  \end{theorem}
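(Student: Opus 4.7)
Effectiveness of $J_{\varphi,\ov D}$ can be checked pointwise. Fix $p\in C$; we may assume $\varphi(p)\in|E|$ (otherwise $\ord_{p}J_{\varphi,\ov D}=0$). Choose a small disk $\Delta\ni p$ with $\Delta\cap V=\Delta^{\ast}$, a local coordinate $z$ at $p$, and a neighborhood $W$ of $\varphi(p)$ in $X$ such that the components of $E$ meeting $W$ are exactly $E_{1},\ldots,E_{k}$ (all through $\varphi(p)$) with local equations $f_{1},\ldots,f_{k}$. Write $\varphi^{\ast}f_{i}=z^{m_{i}}u_{i}$ with $u_{i}(0)\neq 0$, let $a_{i}$ be the coefficient of $E_{i}$ in $[\ov D,X]$, and let $\alpha_{p}$ be the coefficient of $p$ in $[\varphi|_{V}^{\ast}\ov D,C]$. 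After choosing compatible auxiliary divisors (e.g.\ $D'_{C}=\varphi^{\ast}D_{X}$) the contributions from the auxiliary pieces cancel, and a direct computation yields $\beta:=\ord_{p}J_{\varphi,\ov D}=\sum_{i}a_{i}m_{i}-\alpha_{p}$; the goal is $\beta\geq 0$.

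The key construction is a one-parameter family of plurisubharmonic functions. For each real $\lambda>0$ put
\[
 \tilde g_{\lambda}\;:=\;g_{D}\;+\;\sum_{i=1}^{k}(a_{i}+\lambda)\log|f_{i}|
\]
on $W\cap U$. Since each $\log|f_{i}|$ is pluriharmonic off $E_{i}$, $\tfrac{i}{\pi}\ddb\tilde g_{\lambda}=c_{1}(\ov D)\geq 0$ by semipositivity, so $\tilde g_{\lambda}$ is plurisubharmonic. Near a generic point of $E_{i}$, the Lear inequality \eqref{eq:6} (absorbing the smooth auxiliary Green function) gives $g_{D}+a_{i}\log|f_{i}|\leq\log\mu(1/|f_{i}|)+\text{bounded}$, so $\tilde g_{\lambda}\leq\log\mu(1/|f_{i}|)+\lambda\log|f_{i}|+\text{bounded}$; since every subpolynomial $\mu$ satisfies $\log\mu(x)=o(\log x)$, the linear term dominates and $\tilde g_{\lambda}\to-\infty$ approaching $|E|^{\rm reg}$. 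The classical extension theorem for plurisubharmonic functions then extends $\tilde g_{\lambda}$ plurisubharmonically across $|E|\setminus S$ (with value $-\infty$ on $|E|^{\rm reg}$). A maximum-principle argument on small balls around points of $S$ whose boundaries miss $S$ for generic radii, using that $S$ is pluripolar so that boundary bounds for plurisubharmonic functions propagate across it, yields local boundedness above near $S$. The codimension-two plurisubharmonic extension theorem then extends $\tilde g_{\lambda}$ to a plurisubharmonic function on all of $W$.

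Consequently $\varphi^{\ast}\tilde g_{\lambda}$ is subharmonic on $\Delta$, hence upper semi-continuous with values in $[-\infty,+\infty)$, hence bounded above on some neighborhood of $0$. On the other hand, substituting $\varphi^{\ast}f_{i}=z^{m_{i}}u_{i}$ and using the subpolynomial Lear inequality on $C$ yields, on $\Delta^{\ast}$,
\[
 \varphi^{\ast}\tilde g_{\lambda}(z)\;=\;(\beta+\lambda M)\log|z|\;+\;O(\log\mu''(1/|z|)),\qquad M:=\sum_{i}m_{i}>0,
\]
for some subpolynomial $\mu''$. If $\beta+\lambda M<0$, then as $z\to 0$ the linear term $(\beta+\lambda M)\log|z|$ tends to $+\infty$ and dominates the subpolynomial error (since $\log\mu''(x)=o(\log x)$), so $\varphi^{\ast}\tilde g_{\lambda}(z)\to+\infty$, contradicting boundedness above. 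Hence $\beta+\lambda M\geq 0$ for every $\lambda>0$; letting $\lambda\to 0^{+}$ gives $\beta\geq 0$.

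The principal technical point is the plurisubharmonic extension of $\tilde g_{\lambda}$ across $|E|$. The ``$+\lambda$'' perturbation is essential here: without it one only has a subpolynomial upper bound on $\tilde g_{0}=g_{D}+\sum a_{i}\log|f_{i}|$ near $|E|^{\rm reg}$ (and the function may tend to $+\infty$), so the codimension-one extension would fail and the subsequent contradiction argument could not be run.
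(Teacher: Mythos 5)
Your argument is correct in substance, but it is a genuinely different proof from the one in the paper. The paper works with the residue form $\eta=\frac{i}{\pi}\db g_{D}$: it deforms the test disk to a nearby disk $\varphi_{\delta}$ meeting $E\cup D_{X}$ transversally at smooth points, applies Corollary \ref{cor:1} (whose engine is Lemma \ref{lemm:1}, i.e.\ the monotonicity of $t\mapsto-\int_{\partial\Delta_{t}}\eta$ coming from semipositivity, combined with the subpolynomial bound to identify $\lim_{t\to0}\int_{\partial\Delta_{t}}\eta$ with the Lear coefficient), and then lets $\delta\to0$; the inequality $a_{C}\le\sum m_{i}a_{i}$ falls out by comparing the boundary-integral identity on $X$ with the one on $C$. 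You instead perturb the potential by $\lambda\sum\log|f_{i}|$ so that it tends to $-\infty$ along $|E|$, extend it plurisubharmonically across $|E|\setminus S$ (Grauert--Remmert, using boundedness above) and then across the codimension-$\ge 2$ set $S$ (where the extension is automatic for plurisubharmonic functions), and read off $\beta+\lambda M\ge0$ by confronting the local boundedness above of the subharmonic pullback with its logarithmic growth rate; letting $\lambda\to0^{+}$ finishes. Both routes are valid: yours trades the Stokes/residue computation and the deformation of $\varphi$ for standard pluripotential-theoretic extension theorems, while the paper's Lemma \ref{lemm:1} is more self-contained and also delivers the degree formula of Corollary \ref{cor:2}, which is needed in the applications.

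One point you should tighten. As literally written, $\tilde g_{\lambda}=g_{D}+\sum_{i}(a_{i}+\lambda)\log|f_{i}|$ need not be plurisubharmonic on $W\cap U$, nor bounded above near $|E|^{\mathrm{reg}}$: $g_{D}$ blows up to $+\infty$ along the components of $D$ with positive coefficient, and these components (equivalently $|D_{X}|$) may pass through $\varphi(p)$, in which case the term you ``absorb'' from $g_{D_{X}}$ is not bounded. The standard repair is to normalize the auxiliary Green function locally to $g_{D_{X}}=-\sum_{j}\alpha_{j}\log|h_{j}|$ (harmless by Proposition \ref{prop:1}) and to replace $g_{D}$ by $\phi:=g_{D}-g_{D_{X}}$, which is smooth on $W\cap U$ with $\frac{i}{\pi}\ddb\phi=c_{1}(\ov D)\ge0$. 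The extra contributions $\ord_{p}(\varphi^{*}D_{X})\log|z|$ then cancel between the two Lear conditions --- exactly the ``$+m-m$'' cancellation in the paper's proof --- and the growth exponent of the pullback is still $\beta+\lambda M$, so your contradiction argument goes through unchanged.
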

\begin{proof}
  Since we are assuming that $\ov D$ has a \subpol-Lear extension over
  $X$, we have in particular an auxiliary metrized $\rr$-divisor $\ov D_X =(D_{X},g_{D_X})$ on $X$ such that
  $D_{X}|_{U}=D$. We may assume that $D_{X}$ has no common components
  with $E$. 
  Next, observe that the statement is local on $C$, and that the
  support of the height jump divisor is contained in $C\setminus
  V$. Let $p\in C$ such that $\varphi(p)\in |E|$. Choose a local
  coordinate $z$ in a neighborhood of $p$ such that $p$ is given by
  $z=0$, write 
  \begin{displaymath}
    B_{\varepsilon }(p)=\{q\in C\mid \abs{z(q)}\le \varepsilon \}
  \end{displaymath}
  and choose an $\varepsilon >0$ small enough such that
  $B_{\varepsilon }(p)\cap \varphi^{-1}(|E|\cup |D_X|)=\{p\}$  and that
  $B_{\varepsilon }(p)$ is homeomorphic to a closed disk. 
For $\delta \ge 0$ and small enough, let $\varphi_{\delta }\colon
  B_{\varepsilon }(p)\to X$ be a generic deformation of $\varphi$
  varying continuously with $\delta $. By this we mean that
  \begin{enumerate}
  \item for all $\delta$ the map $\varphi_{\delta}$ is continuous, and
    holomorphic on the interior of $B_{\varepsilon}(p)$;
    \item $\varphi_{0}=\varphi|_{B_{\varepsilon }(p)}$;
  \item for $\delta >0$, $\varphi_{\delta }(B_{\varepsilon }(p))$ meets
    $E\cup D_{X}$ properly and transversely and avoids $(|E|\cup |D_{X}|)^{\sing}$; 
  \item the number of intersection points of $\varphi_{\delta
    }(B_{\varepsilon }(p))$ with $|E|$ (respectively with $|D_{X}|$) is
    constant and agrees with the multiplicity at $p$ of $\varphi^*E$
    (respectively of  $\varphi^*D_{X,\red}$). 
  \end{enumerate}
A generic deformation can be easily constructed using the
Transversality Theorem \cite[Pag. 68]{GP}. Such a generic deformation is
represented in Figure \ref{fig:gd}.
  
  \begin{figure}
    \centering
      \begin{tikzpicture}
\begin{axis}[hide axis, x=2.5cm,y=2.5cm]
\addplot [domain=-1.4:1.4, samples=100, variable=\t] ({t^2-1 + t^3-t}, {t^2-1 - (t^3-t)} ); 
\addplot [domain=-100:0, samples=100, variable=\t] ({-2*cos(t) + 2*0.7071067812}, {-2*sin(t) - 2*0.7071067812} ); 
\addplot [domain=-150:-100, samples=100, dashed, variable=\t] ({cos(t) - 0.7071067812}, {sin(t) + 0.7071067812} );
\addplot [domain=0:50, samples=100, dashed, variable=\t] ({cos(t) - 0.7071067812}, {sin(t) + 0.7071067812} );
\addplot [domain=-100:0, samples=100, variable=\t] ({cos(t) - 0.7071}, {sin(t) + 0.7071} ); 
\addplot [domain=-100:0, samples=100, variable=\t] ({cos(t) - 0.95*0.7071}, {sin(t) + 0.95*0.7071} ); 
\addplot [domain=-100:0, samples=100, variable=\t] ({cos(t) - 0.9*0.7071}, {sin(t) + 0.9*0.7071} ); 
\addplot [domain=-100:0, samples=100, variable=\t] ({cos(t) - 0.85*0.7071}, {sin(t) + 0.85*0.7071} ); 
\addplot [domain=-100:0, samples=100, variable=\t] ({cos(t) - 0.8*0.7071}, {sin(t) + 0.8*0.7071} ); 
\addplot [domain=-100:0, samples=100, variable=\t] ({cos(t) - 0.75*0.7071}, {sin(t) + 0.75*0.7071} ); 
\node at (axis cs:2,-0.2) {$E$};
\node at (axis cs:1.6,0.7) {$D_X$};
\node at (axis cs:-1.4,0.2) {$\varphi_0$};
\addplot[] coordinates {(0.21,0.2)} node[pin={[pin distance = 1.5cm]160:{$\varphi_0(B_\epsilon(p))$}}]{} ;
\node at (axis cs:1.1,1.1) {$\varphi_\delta(B_\epsilon(p))$};
\addplot[] coordinates {(0.28,0.63)} node[pin={[pin distance = 1.45cm]40:{}}]{} ;
\addplot[] coordinates {(0.315,0.595)} node[pin={[pin distance = 1.45cm]40:{}}]{} ;
\addplot[] coordinates {(0.35,0.56)} node[pin={[pin distance = 1.6cm]40:{}}]{} ;
\addplot[] coordinates {(0.385,0.525)} node[pin={[pin distance = 1.75cm]40:{}}]{} ;
\addplot[] coordinates {(0.42,0.49)} node[pin={[pin distance = 1.9cm]40:{}}]{} ;
\end{axis}
\end{tikzpicture}
    \caption{Generic deformation}
    \label{fig:gd}
  \end{figure}
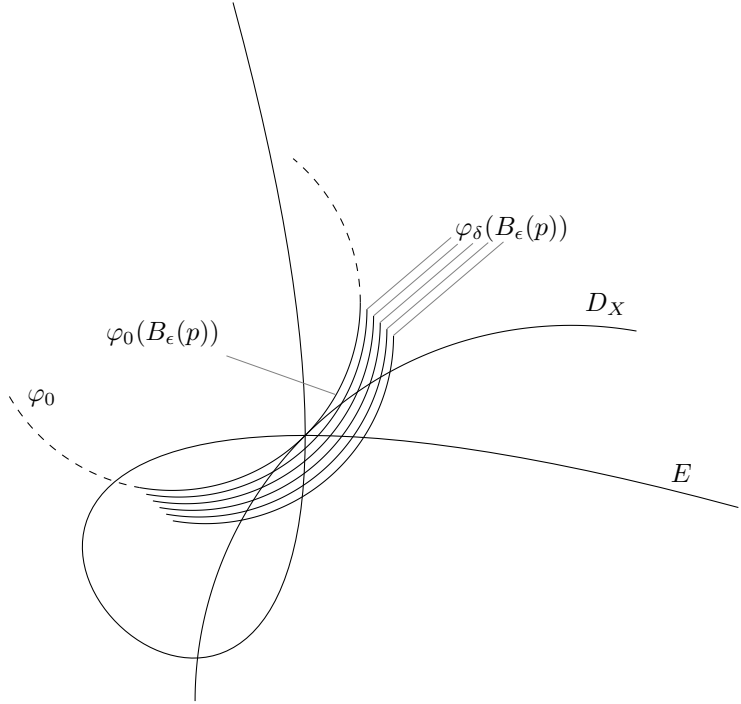
  
Let $E_{1},\dots,E_{k}$ be the irreducible components of $E$ that meet
a small neighborhood of $\varphi(p)$.
We denote by $m$ the multiplicity of $\varphi^{\ast}D_{X,\red}$ at the point
$p$ and by $m_{i}$ the multiplicity at $p$ of $\varphi^{\ast}E_{i}$,
$i=1,\dots,k$. Let $a_{1},\dots,a_{k}$ be the real numbers appearing
in Remark \ref{rem:2} for the \subpol-Lear extension of $\ov D$ to $X$
and the auxiliary metrized $\bb{R}$-divisor $\ov D_{X}$
and $a_{C}$ the one for the \subpol-Lear extension of
$\varphi^{\ast}|_V\ov D$ to $C$ at $p$ with auxiliary divisor
$\varphi^{\ast}\ov D_{X}$ . Then the multiplicity of the height jump  
divisor $J_{\varphi,\ov D}$ at $p$ is given by
\begin{displaymath}
  \sum_{i=1}^{k} m_{i}a_{i}+m-a_{C}-m=\sum_{i=1}^{k}
  m_{i}a_{i}-a_{C}. 
\end{displaymath}
Thus we are reduced to proving that $a_{C}\le \sum m_{i}a_{i}$. 
We write 
\begin{displaymath}
  \omega =\frac{i}{\pi }\ddb g_{D}\, , \qquad \eta= \frac{i}{\pi }\db g_{D}
\end{displaymath}
on $U$ as usual.
From Corollary \ref{cor:1} we deduce that
for all small $\delta >0$ we have
\begin{displaymath}
  0\le \int_{B_{\varepsilon }(p)}\varphi_{\delta }^{\ast}\omega =
  \int_{\partial B_{\varepsilon }(p)}\varphi_{\delta
  }^{\ast}\eta+\sum_{i=1}^{k}  m_{i}a_{i}+m \, . 
\end{displaymath}
Since the image of $\partial B_{\varepsilon }(p)$ by
$\varphi_{\delta}$ varies in a region where the form $\eta$ is smooth
we can take the limit when $\delta $ goes to zero in the previous
equation to deduce
\begin{equation}
  \int_{\partial B_{\varepsilon }(p)}\varphi^{\ast}\eta+\sum_{i=1}^{k}
  m_{i}a_{i}+m \ge 0 \, . \label{eq:1}
\end{equation}
From Lemma \ref{lemm:1} we obtain that
\begin{equation}
\label{eq:2}
  \lim_{\varepsilon \to 0} \int_{\partial B_{\varepsilon }(p)}\varphi^{\ast}\eta=-a_{C}-m.
\end{equation}
Combining equations \eqref{eq:1} and \eqref{eq:2} we deduce $a_{C}\le
\sum_{i=1}^k m_{i}a_{i}$ as required.
\end{proof}

\section{Applications} \label{sec:applications}

The purpose of this section is to discuss some applications of the results shown above.

\subsection{A conjecture of Hain}

Let $X$ be a complex manifold, $E$ a reduced normal crossings divisor on $X$, and write $U=X\setminus |E|$. Let $\bb{V}$ be an admissible polarized variation of Hodge structures of weight $-1$ on $U$, let $J(\bb{V})\to U$ denote the associated Griffiths intermediate jacobian fibration, and let  $\nu \colon U \to J(\bb{V})$ be an admissible normal function as in the introduction. Let $\ov \BB$ be the canonically metrized biextension line bundle on $J(\bb{V})$. As was already explained in the introduction, results of Lear, Hain, Pearlstein, Schnell and Pearlstein and Peters (cf. Theorems \ref{semipositive} and \ref{lear_exists}) show that the assumptions in Theorem \ref{main_intro} are satisfied for the smooth hermitian line bundle $\nu^*\ov \BB$ on $U$. Thus, let $C$ be a connected Riemann surface and $\varphi\colon C\to X$ a holomorphic map 
 such that the image of the generic point of $C$ lies in
$U\setminus |E|$, then we derive
\begin{theorem} \label{hain_conj_bis} 
The height jump divisor $J_{\varphi,\nu^*\ov
  \BB}$ on $C$ determined by  $\nu^*\ov \BB$ and the map $\varphi$ is effective. 
  \end{theorem}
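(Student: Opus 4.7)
The plan is to deduce this theorem as an immediate corollary of Theorem \ref{main_intro} applied to the smooth hermitian line bundle $\ov L = \nu^{\ast}\ov\BB$ on $U$. What must be verified is exactly the three hypotheses of that theorem: (i) semipositivity of the metric on $U$, (ii) Lear-extendability of $\nu^{\ast}\ov\BB$ over $X$, and (iii) Lear-extendability of $\varphi|_V^{\ast}\nu^{\ast}\ov\BB$ over $C$. First I would dispose of (i) and (ii) by direct citation: (i) is the content of Theorem \ref{semipositive}, and (ii) is Theorem \ref{lear_exists} in the form given in the excerpt, using precisely the hypotheses that $E\subset X$ is a reduced normal crossings divisor, $\bb{V}$ is admissible and $\nu$ is an admissible normal function.

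The only point that takes slightly more than a one-line citation is (iii). Here I would argue that the pullback $\varphi|_V^{\ast}\bb{V}$ is again an admissible polarized variation of Hodge structures of weight $-1$ on $V$, since admissibility of variations of mixed Hodge structure is preserved under pullback along holomorphic maps of complex manifolds (and it is only the weight filtration/admissibility that matters here, the pullback of a polarization being a polarization). Likewise $\varphi|_V^{\ast}\nu$ is an admissible normal function for $\varphi|_V^{\ast}\bb{V}$, and the biextension line bundle for the pulled back variation is canonically $\varphi|_V^{\ast}$ of the original biextension, with metric pulled back as well. Since $C$ is a Riemann surface, the reduced boundary divisor $C\setminus V$ is discrete, hence trivially a normal crossings divisor, so Theorem \ref{lear_exists} applies to the pulled back data and gives a Lear extension $[\varphi|_V^{\ast}\nu^{\ast}\ov\BB, C]$ over $C$.

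With (i), (ii), (iii) in hand, Theorem \ref{main_intro} yields at once the effectivity of the height jump divisor $J_{\varphi,\nu^{\ast}\ov\BB}$ on $C$, which is the statement of Theorem \ref{hain_conj_bis}. I expect no real obstacle in carrying this out: the heavy lifting has been performed by Theorems \ref{semipositive}, \ref{lear_exists} and \ref{main_intro} (equivalently Theorem \ref{thm:1}), and the only thing one must be careful with is the functoriality statement that admissibility and the biextension line bundle with its metric are compatible with pullback, which is standard in the theory of admissible variations.
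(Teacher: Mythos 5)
Your proposal is correct and follows exactly the paper's own route: the authors likewise deduce Theorem \ref{hain_conj_bis} by citing Theorems \ref{semipositive} and \ref{lear_exists} to verify the hypotheses of Theorem \ref{main_intro}. Your extra care on point (iii) — pulling back the admissible variation and normal function to $V$ and applying Theorem \ref{lear_exists} over the Riemann surface $C$ — is precisely the step the paper leaves implicit, and your justification of it is sound.
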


Fix an integer $g \geq 2$. We will apply Theorem \ref{hain_conj_bis} to the case where $U=\mm_g$ is the moduli stack (orbifold) of smooth complete complex curves of genus $g$, and where as compactification $X \supset U$ we choose $X = \ov \mm_g$, the moduli stack of stable curves of genus $g$. We note that both $U$ and $X$ are stacks on the site of complex manifolds with \'etale covers in the sense of \cite[Tag 02ZH]{stacks-project}, rather than actual complex manifolds. Although our definitions and results above do not apply directly to this more general setting, they do carry over straightforwardly: first of all our stacks have representable diagonals, and admit an \'etale surjective map from a complex manifold. These two properties follow immediately from the analogous results in the algebraic setting, for which we refer to \cite{Deligne1969The-irreducibil}. Second, if a Lear extension of a smooth hermitian line bundle exists, it is unique, and moreover its formation commutes with arbitrary \'etale base change, for example by dimension considerations. For similar reasons, our discussion of intermediate jacobian fibrations, polarizations, biextensions and normal functions extends to the more general setting. We note that the boundary divisor $X \setminus U$ has normal crossings.

By associating to each moduli point $[Y]$ of $\mm_g$, where $Y$ is a smooth complete curve of genus $g$, its first homology group $H_1(Y,\zz(1))$ we obtain a tautological polarized admissible variation of Hodge structures $\mathbb{H}$ of weight $-1$ on $\mm_g$. Denote by $\mathbb{L}$ the variation of Hodge structure $(\bigwedge^3 \mathbb{H})(-1)$, then wedging with the polarization gives a canonical inclusion $\mathbb{H} \hookrightarrow \mathbb{L}$. Denote by $\bb{V}$ the quotient $\bb{L}/\bb{H}$. Then both $\bb{L}$ and $\bb{V}$ are admissible polarized variations of Hodge structure of weight $-1$ on $\mm_g$. As it turns out, the universal Ceresa cycle in the jacobian of a (pointed) complete curve of genus $g$ canonically gives rise, by the Griffiths Abel-Jacobi map, to an admissible normal function $\nu \colon \mm_g \to J(\bb{V})$. For this particular normal function, the result in Theorem \ref{hain_conj_bis} was conjectured by Hain, see \cite[Conjecture~14.5]{hain_normal}. The possibility that the more general result of Theorem \ref{hain_conj_bis} could be true was suggested by Hain at the end of \cite[Section~14]{hain_normal}.

\subsection{Positivity and slope inequalities} 

For a moment we return to the setting of Section~\ref{sec:main}. In particular  we have $X$  a complex manifold, $E$ a reduced divisor on $X$
and we set $U=X\setminus |E|$.  Let $\ov D=(D,g_{D})$ be a metrized
$\rr$-divisor on $U$, let $C$ be a connected Riemann surface and $\varphi\colon
C\to X$ a holomorphic map 
 such that the image of the generic point of $C$ lies in
$U\setminus |E|$. Write $V=\varphi^{-1}(U)$.   We assume that the hypotheses of
  Theorem \ref{thm:1} are satisfied, in particular we know that the height jump divisor $J_{\varphi,\ov D}=\varphi^*[\ov D, X]
  - [\varphi|_V^*\ov D, C]$ is an effective $\rr$-divisor on $C$.  The result below implies that the \subpol-Lear extension $[\ov D, X]$ of $\ov D$ over $X$ has non-negative degree on each compact connected Riemann surface mapping into $X$ with generic point mapping into $U$.  
\begin{theorem} \label{loc_integrability}  Assume that  $C$ is compact. Then the
inequalities
\[ \deg \varphi^*[\ov D,X] \geq \deg J_{\varphi,\ov D} \geq 0 
\]
are satisfied.
\end{theorem}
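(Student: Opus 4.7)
The proof should be very short, essentially combining Theorem \ref{thm:1} with Corollary \ref{cor:2}. Here is my plan.

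First, the right-hand inequality $\deg J_{\varphi,\ov D} \ge 0$ is immediate from Theorem \ref{thm:1}, which says $J_{\varphi,\ov D}$ is an effective $\rr$-divisor on $C$, together with the fact that $C$ is compact (so that degrees of effective $\rr$-divisors are non-negative).

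For the left-hand inequality, I would unwind Definition \ref{def:7} to write
\[ \deg \varphi^*[\ov D,X] - \deg J_{\varphi,\ov D} = \deg [\varphi|_V^*\ov D, C]. \]
Hence it suffices to show that $\deg [\varphi|_V^*\ov D, C] \ge 0$. This is exactly the sort of assertion that Corollary \ref{cor:2} is designed to produce: the metrized $\rr$-divisor $\varphi|_V^*\ov D$ on $V = \varphi^{-1}(U)$ has, by hypothesis, a \subpol-Lear extension over $C$, and since $C$ is a compact Riemann surface the corollary yields
\[ 0 \le \int_V c_1(\varphi|_V^*\ov D) = \deg [\varphi|_V^*\ov D, C], \]
provided we verify that $\varphi|_V^*\ov D$ is semipositive on $V$.

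The only (very minor) step to check is this semipositivity. But semipositivity of smooth $(1,1)$-forms is preserved under pullback by holomorphic maps between complex manifolds: for any holomorphic disc $\psi\colon \Delta_t \to V$, we have $\psi^* \varphi|_V^* c_1(\ov D) = (\varphi|_V\circ \psi)^* c_1(\ov D)$, and the latter integrates non-negatively on $\Delta_t$ by semipositivity of $c_1(\ov D)$ on $U$. Chaining these together gives the result. I do not anticipate any obstacle; the theorem is really a packaging of Theorem \ref{thm:1} with Corollary \ref{cor:2} via the defining identity of the height jump divisor.
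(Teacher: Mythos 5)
Your proposal is correct and is essentially identical to the paper's own proof: Corollary \ref{cor:2} gives $\deg\,[\varphi|_V^*\ov D,C]=\int_V \varphi|_V^*c_1(\ov D)\ge 0$, hence the left-hand inequality, and Theorem \ref{thm:1} gives the right-hand one. Your extra remark that semipositivity is preserved under pullback is a correct verification of a step the paper leaves implicit.
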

\begin{proof}  From Corollary \ref{cor:2} we infer that 
$\deg\,[\varphi|_V^*\overline D,C]=\int_V \varphi|_V^*\,c_1(\ov D)\ge 0$. From this we obtain that $\deg \varphi^*[\ov D,X] \geq \deg \varphi^*[\ov D,X] -  \deg\,[\varphi|_V^*\overline D,C]=\deg J_{\varphi,\ov D}$. The inequality $\deg J_{\varphi,\ov D} \geq 0$ follows from Theorem \ref{thm:1}.
\end{proof}
Let $g \geq 2$ be an integer. As is explained in \cite[Section~14]{hain_normal}, in the case of the normal function $\nu$ on $\mm_g$ associated to the Ceresa cycle the inequality $\deg
\varphi^*[\nu^* \ov \BB,\ov \mm_g] \geq 0$ that we may deduce from Theorem \ref{loc_integrability} leads to interesting slope
inequalities for compact Riemann surfaces $C$ mapping into $\ov \mm_g$. We recall the matter here briefly.
First of all, it is proved by Hain and D. Reed \cite[Theorem~1.3]{hr} (see also \cite[Theorem~10.1]{hain_normal}) that the Lear
extension $[\nu^* \ov \BB,\ov \mm_g]$ is equal to the so-called
Moriwaki divisor
\[ M = (8g+4)\lambda_1 - g\delta_0 - \sum_{h=1}^{[g/2]} 4h(g-h) \delta_h  \]
on $\ov \mm_g$. Here, following classical notation, $\lambda_1$ denotes the determinant of the Hodge bundle on $\ov \mm_g$, $\delta_0$ is the class of the boundary divisor $\Delta_0$ whose generic point is an irreducible stable curve with one singular point, and $\delta_h$ for $1 \leq h \leq [g/2]$ is the class of the boundary divisor $\Delta_h$ whose generic point is a stable curve consisting of two smooth components, one of genus $h$, and one of genus $g-h$, joined at one point. 

Now, A. Moriwaki has shown \cite{mo} that the divisor $M$ has
non-negative degree on all complete curves in $\ov\mm_g$ not contained
in the boundary. Hain \cite[Proposition~5.4]{hain_normal} notes that $\bb{V}$ extends canonically as a polarized admissible variation of Hodge structure over $\mm_g^c=\ov \mm_g \setminus \Delta_0$, the biextension line bundle $\ov \BB$ extends canonically as a smooth hermitian line bundle over $J(\bb{V})$ over $\mm_g^c$, and $\nu$ extends canonically as an admissible normal function of $J(\bb{V})$ over $\mm_g^c$. In particular (cf. \cite[Theorem~14.1]{hain_normal}), the Moriwaki divisor $M$ has non-negative degree on all complete curves contained in $\mm_g^c$. As already noted in \cite[Section~14]{hain_normal}, Theorem \ref{loc_integrability} combined with \cite[Proposition~5.4]{hain_normal} implies the following joint refinement of Moriwaki's and Hain's results.

\begin{theorem} The Moriwaki divisor $M$ on $\ov \mm_g$ has non-negative degree on all complete curves in $\ov\mm_g$ not contained in $\Delta_0$. 
\end{theorem}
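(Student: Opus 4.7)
My plan is to apply Theorem \ref{loc_integrability} in the setting $X = \ov\mm_g$, $U = \mm_g^c = \ov\mm_g \setminus \Delta_0$, $E = \Delta_0$, to the smooth hermitian line bundle $\nu^*\ov\BB$ on $U$, where $\nu$ is the Ceresa normal function discussed above. Given a complete curve in $\ov\mm_g$ not contained in $\Delta_0$, I first replace it by its normalization to obtain a holomorphic map $\varphi \colon C \to \ov\mm_g$ from a compact connected Riemann surface $C$ whose generic point lies in $\mm_g^c$; the degree on the original curve differs from $\deg \varphi^* M$ only by the (positive) degree of the normalization map, so it suffices to prove $\deg \varphi^* M \geq 0$.

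Next I would invoke \cite[Proposition~5.4]{hain_normal}, already cited above, which extends the polarized admissible VHS $\bb{V}$, the biextension line bundle $\ov\BB$ on $J(\bb{V})$, and the admissible normal function $\nu$ canonically from $\mm_g$ to $\mm_g^c$. Theorems \ref{semipositive} and \ref{lear_exists} applied to this extended data on $\mm_g^c \subset \ov\mm_g$ then supply both the semipositivity of $c_1(\nu^*\ov\BB)$ on $\mm_g^c$ and the existence of a \subpol-Lear extension of $\nu^*\ov\BB$ to all of $\ov\mm_g$ with boundary divisor $\Delta_0$. Theorem \ref{loc_integrability} therefore yields $\deg \varphi^*[\nu^*\ov\BB, \ov\mm_g] \geq 0$, with the Lear extension computed starting from the enlarged open $\mm_g^c$.

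The main task remaining, and the step I expect to require the most attention, is to identify this Lear extension with the Moriwaki divisor $M$. By the result of Hain--Reed \cite{hr}, the Lear extension of $\nu^*\ov\BB$ starting from the smaller open $\mm_g$ equals $M$. Since $\nu^*\ov\BB$ extends as a smooth hermitian line bundle across $\mm_g^c \setminus \mm_g$ by \cite[Proposition~5.4]{hain_normal}, the subpolynomial growth inequalities of Definition \ref{def:5} hold at every point of $\mm_g^c \setminus \mm_g$ with local coefficient $a = 0$; the uniqueness of \subpol-Lear extensions (Proposition \ref{prop:1}) then forces the Lear extensions computed from $\mm_g$ and from $\mm_g^c$ to coincide, so both are equal to $M$. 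The conceptual refinement of the theorem over Moriwaki's and Hain's earlier results --- namely the treatment of complete curves contained entirely in some $\Delta_h$ with $h \geq 1$ --- is precisely what is captured by enlarging the base open from $\mm_g$ to $\mm_g^c$, and what is made possible by the effectiveness of the height jump divisor (Theorem \ref{main_intro}) underlying Theorem \ref{loc_integrability}.
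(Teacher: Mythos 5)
Your proposal is correct and follows essentially the same route as the paper, which deduces the theorem by combining Theorem \ref{loc_integrability} with Hain's Proposition 5.4 (extending $\bb{V}$, $\ov\BB$ and $\nu$ over $\mm_g^c=\ov\mm_g\setminus\Delta_0$) and the Hain--Reed identification of the Lear extension with $M$. Your extra step checking, via uniqueness of \subpol-Lear extensions, that the Lear extensions computed from $\mm_g$ and from $\mm_g^c$ agree is exactly the point the paper leaves implicit, and is handled correctly.
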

Using the more refined inequality
$\deg \varphi^*[\nu^* \ov \BB,X] \geq \deg J_{\varphi,\nu^* \ov \BB}$ one can obtain better slope inequalities for each map $\varphi \colon C \to \ov \mm_g$  with image not contained in $\Delta_0$ where one can control the height jump divisor $J_{\varphi,\nu^*\ov \BB}$. For example, it was noticed by Hain in \cite[Section~14]{hain_normal} that one has a strictly positive height jump for a small disk mapping generically in $\mm_g$ and passing through a point in the closure of the hyperelliptic locus in $\ov\mm_g$ corresponding to a stable curve which is the union of a genus $h$ stable curve and a genus $g-h-1$ stable curve, intersecting at two points. This example has been extended and analyzed in more detail by Brosnan and Pearlstein, cf. \cite[Theorem~201]{brospearl}.

\subsection{Arakelov line bundles}

We discuss another situation where Theorem \ref{main_intro} can be applied. The context is that of families of compact and connected Riemann surfaces equipped with canonical smooth hermitian line bundles as defined by Arakelov in \cite{ar} using canonical Green's functions. We obtain the necessary Lear extendability from the paper \cite{djfaltings} by the third author, and the necessary semipositivity from the semipositivity of the Poincar\' e bundle on the jacobian, together with an (unsurprising) lemma that states that semipositivity of smooth hermitian line bundles is preserved upon taking Deligne self-pairing.

Let $\var$ and $U$ be smooth complex algebraic varieties and let $\pi \colon \var \to U$ be a proper submersion whose fibers are compact connected Riemann surfaces. Following P. Deligne in \cite{de} one has a canonical holomorphic line bundle $\pair{L,M}$ on $U$ associated to any two holomorphic line bundles $L, M$ on $\var$. The line bundle $\pair{L,M}$  is bi-multiplicative in $L, M$, and its formation is compatible with any base change. If $L, M$ are equipped with smooth hermitian metrics $\|\cdot\|_L$ and $\|\cdot\|_M$, then $\pair{L,M}$ has a canonical induced smooth hermitian metric $\|\cdot\|_{\pair{L,M}}$. With this canonical metric, one has the equality 
\begin{equation} \label{fiber_int} c_1 (\pair{L,M},\|\cdot\|_{\pair{L,M}}) = \int_\pi c_1(L,\|\cdot\|_L) \wedge c_1(M,\|\cdot\|_M)  
\end{equation}
of $(1,1)$-forms on $U$, where $\int_\pi$ denotes integration along the fiber.

\begin{lemma} \label{square} Let $L$ be a holomorphic line  bundle on $\var$ equipped with a smooth hermitian metric $\|\cdot\|$. Assume that the metric $\|\cdot\|$ is semipositive on $\var$. Then the induced metric on the Deligne pairing $\pair{L,L}$ is semipositive on $U$.
\end{lemma}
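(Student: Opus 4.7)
The plan is to invoke the fiber-integral formula (\ref{fiber_int}), which identifies $c_1(\pair{L,L},\|\cdot\|_{\pair{L,L}})$ with $\int_\pi c_1(L,\|\cdot\|)\wedge c_1(L,\|\cdot\|)$, and then to verify semipositivity of the right-hand side directly against arbitrary test disks in $U$.

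To do this cleanly, I would first note that for a smooth $(1,1)$-form $\omega$ on a complex manifold, the disk-integral notion of semipositivity recalled in Section \ref{sec:prelim} coincides with pointwise Hermitian semi-definiteness of $\omega$: the nontrivial direction follows by pulling $\omega$ back along a small holomorphic disk through any point with prescribed tangent vector and noting that a negative eigenvalue of $\omega$ would give a negative integral on a sufficiently small subdisk. Applied to $\omega = c_1(L,\|\cdot\|)$ on $\var$, a pointwise unitary diagonalization $\omega = \sum_j \lambda_j\, i\,dz_j \wedge d\bar z_j$ with $\lambda_j \geq 0$ yields
\[ c_1(L,\|\cdot\|)\wedge c_1(L,\|\cdot\|) = 2\sum_{j<k}\lambda_j\lambda_k\,(i\,dz_j\wedge d\bar z_j)\wedge(i\,dz_k\wedge d\bar z_k), \]
which at every point is a non-negative combination of strongly positive $(2,2)$-forms. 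In particular, its restriction to any complex surface in $\var$ is a non-negative top-degree form.

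For the semipositivity test itself, let $\psi\colon \Delta_t \to U$ be continuous and holomorphic on the interior. Pulling back $\pi$ along $\psi$ yields a proper submersion $\pi_\psi\colon \var_\psi \to \Delta_t$ of relative complex dimension one, carrying the pulled-back smoothly metrized line bundle $L_\psi$. Because the Deligne pairing and its canonical metric are functorial under base change, one has an isometric identification $\psi^*\pair{L,L} \cong \pair{L_\psi,L_\psi}$, and combining this with (\ref{fiber_int}) and Fubini gives
\[ \int_{\Delta_t} \psi^*\,c_1(\pair{L,L},\|\cdot\|_{\pair{L,L}}) = \int_{\var_\psi} c_1(L_\psi,\|\cdot\|)\wedge c_1(L_\psi,\|\cdot\|). \]
The right-hand side is the integral over the complex surface $\var_\psi$ of a non-negative top-degree form, hence is $\geq 0$, establishing the required semipositivity.

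The main obstacle, as I see it, is essentially bookkeeping at the boundary of $\Delta_t$: since $\psi$ is only assumed continuous on $\partial\Delta_t$, the base change $\var_\psi$ is only a topological manifold there, and the Fubini identity has to be justified with some care. This is handled cleanly by exhausting the open disk $\Delta_t^{\circ}$ by slightly smaller closed subdisks on which $\psi$ extends holomorphically to an open neighborhood, running the argument there, and letting the radius tend to $t$. The only other point to confirm is the compatibility of the canonical metric on the Deligne pairing with arbitrary holomorphic base change, which is standard but worth recording explicitly.
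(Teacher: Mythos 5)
Your argument is correct, but it reaches the conclusion by a different route than the paper. Both proofs start from the curvature identity (\ref{fiber_int}), but the paper then works pointwise on $U$: it writes $\omega=c_1(L,\|\cdot\|)$ in local coordinates adapted to the fibration, expresses $\omega\wedge\omega$ in terms of the block decomposition $\left(\begin{smallmatrix} A & \ov b\\ b^t & c\end{smallmatrix}\right)$ of the (positive semidefinite) curvature matrix, and reduces the claim to the Schur-complement-type inequality that $Ac-\ov b\,b^t$ is positive semidefinite; this exhibits the fiber integral directly as a pointwise semipositive $(1,1)$-form on $U$, with no base change of the family. You instead pull the whole fibration back along a test disk, invoke base-change compatibility of the Deligne pairing (or, what suffices, of fiber integration) together with Fubini, and conclude from the weak positivity of $\omega\wedge\omega$ as a $(2,2)$-form. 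Your route is more conceptual and generalizes immediately to higher relative dimension and to $\pair{L,M}$ with $L,M$ both semipositive, at the cost of importing the functoriality of the metrized Deligne pairing and the boundary bookkeeping you flag (which your exhaustion argument handles adequately); the paper's route is entirely local and self-contained. Two small points: your preliminary equivalence between the disk-integral notion of semipositivity and pointwise Hermitian semi-definiteness is indeed needed and is also used implicitly by the paper; and the phrase ``restriction to any complex surface in $\var$'' should really be ``pullback under any holomorphic map from a complex surface,'' since $\var_\psi\to\var$ need be neither injective nor immersive --- but pullbacks of strongly positive forms under arbitrary holomorphic maps remain strongly positive, so the step stands.
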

\begin{proof} Write $\omega = c_1(L,\|\cdot\|)$. Then we have from (\ref{fiber_int}) that
\[ c_1 (\pair{L,L}) = \int_\pi \omega \wedge \omega \, . \]
Let $p \in \var$. We can choose local holomorphic coordinates $u_1,\ldots,u_n,z$ around $p$
with $u_1,\ldots,u_n$ local holomorphic coordinates around $\pi(p)$ on $U$, and
with $z$ a holomorphic coordinate in the fiber. Using the summation convention we can write
\[ \omega = i \left( A_{jk} \, \d u_j \, \d \ov u_k + \d u_j \, b_j^t \, \d \ov z + \d z \, \ov b_j \, \d \ov u_j + c \, \d z \, \d \ov z   \right) \]
with
\[ \Omega = \mat{ A & \ov b \\ b^t & c } \]
a positive semidefinite hermitian matrix. We find
\[ \begin{split} \omega \wedge \omega & = -2 \left( A_{jk}c \, \d u_j \, \d \ov u_k \, \d z \, \d \ov z - \ov b_j \, b_k^t \, \d u_j \, \d \ov u_k \, \d z \, \d \ov z \right) \\
& = 2\,i \, \left( A_{jk}c - \ov b_j\, b_k^t \right) \, \d u_j\, \d \ov u_k \,i\, \d z \, \d \ov z \, . 
\end{split} \]
As $i\, \d z \,\d \ov z$ is semipositive, using a partition of unity we see that it suffices to show that the $(1,1)$-form
\[ i \, \left( A_{jk}c - \ov b_j \, b_k^t \right) \, \d u_j \, \d \ov u_k\]
is semipositive locally on $U$. We are done once we show that the hermitian matrix
\[ A c - \ov b b^t \]
is positive semidefinite. For this it suffices to show that for all $x \in \cc^n$ the inequality
\begin{equation} \label{required} x^t Ac \ov x - x^t \ov b b^t \ov x \geq 0 
\end{equation}
holds.
From the fact that $\Omega\geq 0$ we obtain that for all $x \in \cc^n, y \in \cc$ we have
\begin{equation} \label{whatweknow} x^t A \ov x + x^t \ov b \ov y + y b^t \ov x + y c \ov y \geq 0 \, . 
\end{equation}
Note that both $A, c \geq 0$.
If $c=0$ then it easily follows that $b=0$ and the required inequality (\ref{required}) holds trivially. If $c>0$ we take $y = -x^t \ov b c^{-1}$ in (\ref{whatweknow}) and (\ref{required}) follows again.   
\end{proof}

Let $\omar$ denote the relative dualizing sheaf of $\var \to U$, equipped with the Arakelov metric from \cite{ar}. Assume that the fibers of $\var \to U$ have positive genus $g$. Let $J(\bb{H}) \to U$ denote the jacobian family associated to $\var \to U$, and denote by $\ov \BB$ the Poincar\' e bundle on $J(\bb{H})$, equipped with its canonical smooth hermitian metric.  
\begin{proposition}   \label{with_section} Assume that the map $\var \to U$ has a holomorphic section $P \colon U \to \var$. Then the smooth hermitian line bundle $P^*\omar$ is semipositive on $U$. 
\end{proposition}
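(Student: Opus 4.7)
The plan is to combine three ingredients: Theorem~\ref{semipositive} (semipositivity of pullbacks of the biextension bundle along admissible normal functions), Lemma~\ref{square} (Deligne self-pairings preserve semipositivity), and the Arakelov-theoretic identification from~\cite{djfaltings}.

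First, I would show that the Arakelov-metrized line bundles $\ov\omar$ and $\ov{\oo}(P)$ on $\var$ are both semipositive. The underlying idea is that, via~\cite{djfaltings}, each of these (up to a suitable positive tensor power and a pullback from $U$) arises as a pullback of the canonically metrized biextension bundle $\ov\BB$ on $J(\bb{H})$ along an admissible normal function of the VHS $\bb{H}$ pulled back to $\var$; for $\ov{\oo}(P)$ one can use the relative Abel--Jacobi embedding with basepoint $P$, and for $\ov\omar$ one uses the dual Abel--Jacobi construction given by the adjunction formula $\omar|_{P(U)} \cong \oo(-P)|_{P(U)}$. Semipositivity on $\var$ then follows from Theorem~\ref{semipositive} applied with base $\var$.

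Next, I would invoke the basic Deligne-pairing identity $\pair{L, \oo(P)} = P^*L$, which combined with Arakelov-metric compatibility yields an isometry of metrized line bundles
\[
P^*\ov\omar \;\cong\; \pair{\ov\omar, \ov{\oo}(P)}
\]
on $U$. To conclude, I would apply the cross-pairing analog of Lemma~\ref{square}: the Deligne pairing of two semipositive metrized line bundles on $\var$ is semipositive on $U$. This follows from the curvature formula~\eqref{fiber_int} together with the fact that the wedge product of two semipositive $(1,1)$-forms is a strongly positive $(2,2)$-form, so that its fiber integral is semipositive on $U$. Applied to $\ov\omar$ and $\ov{\oo}(P)$ this gives semipositivity of $P^*\ov\omar$.

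The main obstacle is the Arakelov-theoretic identification from~\cite{djfaltings}: realizing $\ov\omar$ and $\ov{\oo}(P)$ (or suitable positive tensor powers thereof) as pullbacks of the semipositive biextension bundle $\ov\BB$ requires a delicate metric comparison between the Arakelov normalization via the canonical Green's functions and the biextension metric, with sign conventions that are crucial for the positivity statements to point in the right direction. The cross-pairing variant of Lemma~\ref{square} is a straightforward extension of the argument given there and presents no additional difficulty.
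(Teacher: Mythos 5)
Your overall strategy (Theorem \ref{semipositive} plus a Deligne-pairing positivity lemma plus an isometry from \cite{djfaltings}) is in the right spirit, but the decomposition you propose does not work, because neither of the two inputs you want to feed into the pairing is available. First, the semipositivity of $\ov{\oo}(P)$ with the Arakelov--Green metric on the total space $\var$ is false in general: it holds along the fibres (the fibrewise curvature is the canonical form $\mu$) but not in the horizontal directions. Indeed, if it held, your own cross-pairing version of Lemma \ref{square} would make $\pair{\ov{\oo}(P),\ov{\oo}(P)}$ semipositive on $U$; but by the Arakelov adjunction isometry $\pair{\ov{\oo}(P),\ov{\oo}(P)}\cong P^*\ov{\oo}(P)\cong (P^*\omar)^{\vee}$, so you would conclude that $P^*\omar$ is semi\emph{negative}, which together with the statement being proved would force $c_1(P^*\omar)=0$ identically --- false already for the universal family over $\mm_{g,1}$. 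Relatedly, neither $\ov{\oo}(P)$ nor $\omar$ individually is, up to powers and contributions from the base, the pullback of $\ov\BB$ along a normal function; only combinations such as $\delta_P^*\ov\BB$ are, and your appeal to adjunction $\omar|_{P(U)}\cong\oo(-P)|_{P(U)}$ only controls the restriction to the section (with the wrong sign at that), not the curvature of $\omar$ on $\var$. Second, the semipositivity of $\omar$ on $\var$ is not available at this stage: in the paper it is Corollary \ref{arakmetric_semipos}, which is \emph{deduced from} the proposition you are proving, by applying it to $\var\times_U\var\to\var$ with the diagonal section. Using it here without an independent proof would be circular.

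The paper's actual argument sidesteps both issues by invoking a single isometry from \cite[Proposition~8.5]{djfaltings}, namely $(P^*\omar)^{\otimes 4g^2}\cong\pair{\delta_P^*\ov\BB,\delta_P^*\ov\BB}\otimes\kappa^*\ov\BB$, where $\delta_P\colon\var\to J$ is the Abel--Jacobi map $x\mapsto[P_u-x]$ and $\kappa\colon U\to J$ sends $u$ to $[(2g-2)P_u-\omega_{\var_u}]$. Here $\delta_P^*\ov\BB$ is semipositive on $\var$ and $\kappa^*\ov\BB$ is semipositive on $U$ by Theorem \ref{semipositive}, and Lemma \ref{square} handles the self-pairing. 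For what it is worth, your cross-pairing extension of Lemma \ref{square} is correct (the required matrix inequality follows from Cauchy--Schwarz and the arithmetic--geometric mean inequality, exactly as in the proof of Lemma \ref{square}), but it is not needed once the correct isometry is used.
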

\begin{proof} We abbreviate $J=J(\bb{H})$. Let $\delta_P \colon \var \to J$ be the Abel-Jacobi map associated to the section $P$, i.e. the $U$-map $\var \to J$ that for all $u \in U$ sends a point $x \in \var_u$ to the divisor class of $P_u-x$ in $J_u$. Let $\kappa \colon U \to J$ denote the holomorphic map that sends $u \in U$ to the divisor class of $(2g-2)P_u - \omega_{\var_u}$ in $J_u$.  As is shown in \cite[Proposition~8.5]{djfaltings} we have a canonical isometry
\begin{equation} \label{selfintersectionpoint} \left( P^* \omar \right)^{\otimes 4g^2} \isom \pair{\delta_P^*\ov \BB,\delta_P^*\ov \BB} \otimes \kappa^*\ov \BB 
\end{equation}
of smooth hermitian line bundles on $U$. From Theorem \ref{semipositive} we deduce that $\delta_P^*\ov \BB$ is semipositive on $\var$, and that $\kappa^*\ov\BB$ is semipositive on $U$. It follows from Lemma \ref{square} that $\pair{\delta_P^*\ov\BB,\delta_P^*\ov\BB}$ is semipositive on $U$. The result follows.
\end{proof}
\begin{corollary} \label{arakmetric_semipos} The smooth hermitian line bundle $\omar$ is semipositive on $\var$, and the Deligne pairing $\pair{\omar,\omar}$ is semipositive on $U$.
\end{corollary}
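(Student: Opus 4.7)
The plan has two parts: first, establish pointwise semipositivity of $c_{1}(\omar)$ on $\var$ by separating the vertical and horizontal tangent directions at each point; second, deduce semipositivity of $\pair{\omar,\omar}$ on $U$ as an immediate consequence of Lemma \ref{square} applied to $\omar$.

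For the first part, since $c_{1}(\omar)$ is a smooth $(1,1)$-form, semipositivity may be checked pointwise. Fixing $x\in \var$, it suffices to show that for every $v\in T_{x}\var$ and every holomorphic disk $\varphi\colon \Delta\to \var$ with $\varphi(0)=x$ and $d\varphi(0)\cdot \tfrac{\partial}{\partial z}=v$, the pullback $\varphi^{\ast}c_{1}(\omar)$ is non-negative at the origin (this value depending only on $v$). I distinguish two cases.

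If $d\pi(v)=0$, I may take $\varphi$ with image in the fiber $\var_{\pi(x)}$; Arakelov's classical construction \cite{ar} gives that $\omar$ restricted to each fiber is the canonical bundle equipped with the Arakelov metric, whose first Chern form equals $g$ times the Arakelov canonical $(1,1)$-form of the fiber (a positive K\"ahler form), so $\varphi^{\ast}c_{1}(\omar)\geq 0$. If instead $d\pi(v)\neq 0$, the key step is to build, on some open neighbourhood $U_{0}$ of $\pi(x)$, a local holomorphic section $P\colon U_{0}\to \var$ with $P(\pi(x))=x$ and $dP(d\pi(v))=v$; this is possible because $\pi$ is a submersion with one-dimensional fibers, so in a local product trivialization a section is the graph of a holomorphic function $U_{0}\to \Delta$ whose value and first derivative at $\pi(x)$ may be prescribed freely. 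Replacing $\varphi$ by $P\circ \psi$ for any holomorphic disk $\psi\colon \Delta\to U_{0}$ with $d\psi(0)\cdot\tfrac{\partial}{\partial z}=d\pi(v)$ does not change the value of $\varphi^{\ast}c_{1}(\omar)$ at the origin, and by Proposition \ref{with_section} applied to $\var|_{U_{0}}\to U_{0}$ one has $(P\circ \psi)^{\ast}c_{1}(\omar)=\psi^{\ast}c_{1}(P^{\ast}\omar)\geq 0$.

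The second statement is then immediate: Lemma \ref{square} applied to $L=\omar$, which we have just shown to be a semipositive smooth hermitian line bundle on $\var$, yields semipositivity of $\pair{\omar,\omar}$ on $U$. The main subtlety I anticipate is the construction of local sections with prescribed tangent vectors and the verification that the pullback of a smooth $(1,1)$-form at a point depends only on the first-order jet of the map; both are standard but worth recording carefully. The remaining arguments reduce to previously-stated results.
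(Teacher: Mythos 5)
Your argument is correct in substance, but it takes a genuinely different route from the paper. The paper's proof of the first assertion is a one-line base-change trick: apply Proposition \ref{with_section} to the first projection $\var\times_U\var\to\var$ with the diagonal section $\Delta\colon\var\to\var\times_U\var$; since the Arakelov-metrized relative dualizing sheaf is compatible with this base change, $\Delta^*\omar$ is exactly $\omar$ on $\var$, and semipositivity on all of $\var$ follows at once, staying entirely within the stated algebraic hypotheses (a global algebraic family with a global section). You instead verify pointwise semipositivity by splitting $T_x\var$ into vertical and non-vertical vectors: the vertical case via Arakelov's fiberwise curvature computation, the non-vertical case via locally constructed holomorphic sections with prescribed $1$-jet fed into Proposition \ref{with_section}. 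This works, but note two points. First, the curvature of the Arakelov metric on the canonical bundle of a genus-$g$ fiber is $(2g-2)$ times the canonical $(1,1)$-form, not $g$ times (and it vanishes when $g=1$); the conclusion $\geq 0$ is unaffected. Second, you are applying Proposition \ref{with_section} to $\var|_{U_0}\to U_0$ with $U_0$ an analytic open set and $P$ a local holomorphic section, which is outside the proposition's stated setting of algebraic varieties with algebraic sections; this is harmless because the proof of that proposition (the isometry \eqref{selfintersectionpoint}, Theorem \ref{semipositive}, and Lemma \ref{square}) is local on the base, but it should be said explicitly. The paper's diagonal trick buys brevity and avoids both issues; your approach buys a more transparent geometric picture, making visible that fiber directions are handled by classical Arakelov theory while base directions reduce to the section case. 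The second assertion is handled identically in both proofs, by applying Lemma \ref{square} to $L=\omar$.
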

\begin{proof} Apply Proposition \ref{with_section} to the first projection $\var \times_U \var \to \var$ and the diagonal section $\Delta \colon \var \to \var \times_U \var$ to obtain the semipositivity of $\omar$ on $\var$. Then apply again Lemma \ref{square} to obtain the semipositivity of $\pair{\omar,\omar}$ on $U$.
\end{proof}
Now we assume a smooth complex variety $X \supset U$ is given with boundary $E=X \setminus U$ a reduced normal crossings divisor. The next result is then part of \cite[Theorem~1.3]{djfaltings}.
\begin{theorem} \label{arakmetric_learexists} Assume that the family $\var \to U$ can be extended into a semistable curve $Y \to X$. Then the Deligne pairing $\pair{\omar,\omar}$ has a Lear extension over $X$. If we assume moreover that $\var \to U$ has a holomorphic section $P \colon U \to \var$, then the smooth hermitian line bundle $P^*\omar$ also has a Lear extension over $X$. 
\end{theorem}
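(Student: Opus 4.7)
The plan is to deduce both statements directly from \cite[Theorem~1.3]{djfaltings}, where exactly such extensions are produced for $\pair{\omar,\omar}$ and $P^*\omar$ in the semistable setting. Let me outline the strategy one would adopt if one were to reproduce the argument.

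First, for the Deligne pairing $\pair{\omar,\omar}$: the family $Y \to X$ being a semistable extension of $\var \to U$ supplies a natural auxiliary line bundle, namely the Deligne pairing $\pair{\omega_{Y/X},\omega_{Y/X}}$ of the relative dualizing sheaf of $Y/X$, which restricts to $\pair{\omar,\omar}$ on $U$. One then has to compare the canonical metric on $\pair{\omar,\omar}$ computed from Arakelov Green's functions fiberwise with any smooth metric extending over $X$ across the boundary. The asymptotic analysis of the Arakelov Green's function near a semistable degeneration — which in good coordinates expresses the Green's function as a sum of a smooth term plus logarithmic contributions weighted by the combinatorics of the dual graph of the special fiber — shows that, away from an analytic subset $S \subset |E|$ of codimension at least $2$ (essentially the locus over which the dual graph of the fiber gets more complicated than a single node, or where the total space $Y$ fails to be smooth), the difference between the Arakelov metric and a smooth extension grows at worst like a power of $\log |f|$, where $f$ is a local equation for $E$. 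This is the subpolynomial bound required by Definition \ref{def:5}, and the integers $a_i$ attached to the irreducible components of $E$ are rational combinations of the combinatorial data of the generic boundary fibers.

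For $P^*\omar$: using that $Y \to X$ is a semistable curve, the section $P \colon U \to \var$ extends uniquely (away from a codimension $\geq 2$ locus where $P$ would otherwise hit the singular locus of $Y$) to a section $P \colon X \setminus S \to Y$ landing in the smooth part $Y\ssm$ of $Y/X$. Then $P^*\omega_{Y/X}$ gives the auxiliary line bundle on $X \setminus S$, extended to all of $X$ by Hartogs (this is where the semipositivity input from Corollary \ref{arakmetric_semipos} via \cite{schiffman:eplb}, \cite{schumacher:ethlb} is in fact invoked, as in the discussion preceding Theorem \ref{main_intro}). The comparison with the Arakelov metric on $P^*\omar$ is done by applying the same asymptotic analysis of the Arakelov Green's function, now in a tubular neighborhood of the image of $P$ rather than along the diagonal; again the deviation is subpolynomial with explicit coefficients on the $E_i$.

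The main obstacle, and the content of \cite[Theorem~1.3]{djfaltings}, is the asymptotic analysis of the Arakelov Green's function in families degenerating to a stable curve. Once that analysis is in place, both statements are essentially immediate from the definition of (subpolynomial) Lear extension. We therefore take the approach of simply citing \cite[Theorem~1.3]{djfaltings} for both assertions and refer the reader to that source for the delicate analytic estimates.
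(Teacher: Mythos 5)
Your proposal matches the paper exactly: the paper offers no proof of this theorem beyond the remark that it ``is then part of \cite[Theorem~1.3]{djfaltings}'', and your citation of that same result (together with a reasonable sketch of the Arakelov Green's function asymptotics it rests on) is therefore the same approach. No gap to report.
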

Combining Corollary \ref{arakmetric_semipos} and Theorem \ref{arakmetric_learexists} with Theorem \ref{main_intro} we obtain the following result.
\begin{theorem} 
Assume that the family $\var \to U$ can be extended into a semistable curve $Y \to X$. 
Let $C$ be a connected Riemann surface, and  $\varphi \colon C \to X$ a holomorphic map such that the image of the generic point of $C$ lies in $U$. Then the height jump divisor $J_{\varphi,\pair{\omar,\omar}}$ associated to $\varphi$ and $\pair{\omar,\omar}$ on $C$ is effective. If we assume moreover that the map $\var \to U$ has a holomorphic section $P \colon U \to \var$, then also the height jump divisor $J_{\varphi,P^*\omar}$ on $C$ is effective. 
\end{theorem}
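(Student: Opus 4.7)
The plan is to apply Theorem \ref{main_intro} to the smooth hermitian line bundles $\pair{\omar,\omar}$ on $U$, and in the case with section also to $P^*\omar$ on $U$, after selecting a non-zero rational section to translate to metrized $\rr$-divisors. Three hypotheses must be verified for each line bundle: semipositivity on $U$, Lear-extendability over $X$, and Lear-extendability of the pullback along $\varphi|_V$ over $C$.

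The semipositivity of $\pair{\omar,\omar}$ on $U$ is Corollary \ref{arakmetric_semipos}, and the semipositivity of $P^*\omar$ on $U$ is Proposition \ref{with_section}. The Lear extensions of both line bundles over $X$ are provided by Theorem \ref{arakmetric_learexists}, using the given semistable extension $Y \to X$ of $\var \to U$.

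The main step, and the only one requiring genuine work, is producing the Lear extensions over $C$. For this I would construct a semistable extension $Y_C \to C$ of the pulled-back family $\varphi|_V^*\var \to V$ across the finitely many points of $C\setminus V$. Concretely, form the fibre product $C\times_X Y \to C$, a family of nodal curves on a possibly singular surface, and pass to a relative minimal semistable model $Y_C \to C$; since $C$ is a smooth one-dimensional complex manifold, this is classical semistable reduction for curves over a one-dimensional base and no base change of $C$ itself is required. Over $V$ this model restricts to $\varphi|_V^*\var \to V$. The formation of the relative dualizing sheaf, of the Arakelov metric, and of the Deligne pairing are all compatible with base change, so $\pair{\omar_{Y_C/C},\omar_{Y_C/C}}|_V$ is canonically isometric to $\varphi|_V^*\pair{\omar,\omar}$. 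Applying Theorem \ref{arakmetric_learexists} with $(X,U,Y)$ replaced by $(C,V,Y_C)$ then yields the Lear extension of $\varphi|_V^*\pair{\omar,\omar}$ over $C$. For the version with section, note that $\varphi|_V^*P$ is automatically a holomorphic section of $\varphi|_V^*\var \to V$, so the same reference applied in this setting produces the Lear extension of $\varphi|_V^*P^*\omar$ over $C$.

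With all hypotheses verified, Theorem \ref{main_intro} applies to each line bundle and yields the effectivity of $J_{\varphi,\pair{\omar,\omar}}$ and, in the second situation, of $J_{\varphi,P^*\omar}$. The only non-bookkeeping ingredient is the construction and base-change compatibility of the semistable model $Y_C\to C$; once this is in hand, everything else follows by direct combination of the previously established results.
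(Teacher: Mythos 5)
Your proof follows the paper's approach exactly: the paper derives this theorem as an immediate combination of Corollary \ref{arakmetric_semipos}, Theorem \ref{arakmetric_learexists} and Theorem \ref{main_intro}, with no further argument supplied. You additionally make explicit the verification that the pullbacks are Lear-extendable over $C$ (a hypothesis of Theorem \ref{main_intro} the paper leaves implicit), and your construction is sound --- the key point being that the fibres of $C\times_X Y \to C$ are already nodal since $Y \to X$ is semistable, so one only needs to resolve the $A_{m-1}$ singularities of the total space at the nodes, and no base change of $C$ is required.
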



\subsection{Concavity of the height jump}

It is natural to investigate how the height jump divisor changes if
one varies the test curve $\varphi$. Let $X$ be a complex manifold of
dimension $n$,
$E$ a reduced normal crossings divisor on $X$, and put $U = X
\setminus |E|$. Let $\ov D=(D,g_D)$ be a metrized $\rr$-divisor on
$U$, and assume as usual that $\ov D$ is semipositive on $U$ and
sp-Lear extendable over $X$. Let $\Delta^0$ denote the open unit disk
in $\bb{C}$. Let $p \in X$ be a point, let $z=(z_1,\ldots,z_n)$ with
$z_i \in \Delta^0$ be local coordinates around $p$ on $X$ such that
$z_i(p)=0$ for $i=1,\ldots,r$,  and the divisor $E$ is given in the
local coordinates $z$ by the equation $z_1 \cdots z_r = 0$.  Here
$0\le r\le n$.

 Let $V \subset \Delta^0$ denote the punctured open disk. Assume that $\varphi|_V^*\ov D$ is Lear extendable over $\Delta^0$ for all holomorphic maps $\varphi \colon \Delta^0 \to X$ such that $\varphi(0)=p$ and such that $\varphi^{-1}U=V$. Let $\ov D_X=(D_X,g_{D_X})$ be an auxiliary metrized $\rr$-divisor for $\ov D$ on $X$, and write 
\[ [\ov D,X] = D_X + \sum_{i \in I} a_i E_i \]
where $a_i \in \rr$ and where $E_i$ are the irreducible components of $E$. As we are only interested in the local situation around $p$ we may assume without loss of generality that $I=\{1,\ldots,r\}$ and that for all $i=1,\ldots,r$ the irreducible component $E_i$ is given locally on $X$ by the equation $z_i=0$. Let $\varphi \colon \Delta^0 \to X$ be a holomorphic map such that $\varphi(0)=p$ and such that $\varphi^{-1}U=V$. Then $\varphi^*\ov D_X$ is an auxiliary metrized $\rr$-divisor for $\varphi|_V^*\ov D$, and we can write
\[ [\varphi|_V^*\ov D,\Delta^0] = \varphi^*D_X + a \cdot[0] \]
where $a \in \rr$. When one varies $\varphi$ it is readily seen that $a$ only depends on the multiplicities $m_i = \ord_0 (\varphi^*(z_i))$ for $i=1,\ldots,r$. Thus we obtain a function $a \colon \zz_{> 0}^r \to \rr$, with associated jump function $J = J_{\ov D,p} \colon \zz_{> 0}^r \to \rr$ given by 
\[ J(m_1,\ldots,m_r) = -a(m_1,\ldots,m_r)+ \sum_{i=1}^r m_i a_i \, . \]

One easily checks that the function $J$ is homogeneous of weight one, and extends to a function on $\zz_{\geq 0}^r$. Indeed, note that setting $m_i$ equal to zero for all $i \in I'\subseteq I$ corresponds to moving $p$ into the locus where $z_i \neq 0$ for all $i \in I'$. In particular, the local height jump function $J$ extends canonically to a function $\ov J \colon \qq_{\geq 0}^r \to \rr$. 

In \cite[Theorem~5.37]{pearldiff} Pearlstein shows that the local height jumps for the pullback of a biextension line bundle along an admissible normal function as we discussed above are given by a rational function in $\qq(x_1,\ldots,x_r)$. In general, one expects that because of the semipositivity of the metric involved, the function $ \ov J \colon \qq_{\geq 0}^r \to \rr$ should have good concavity properties. Under mild assumptions, this is indeed true, as we show in the next proposition. 
\begin{proposition}  \label{concave} Assume that for all $s \in
  \zz_{>0}$ and for all holomorphic maps $\varphi \colon (\Delta^0)^s
  \to X$ such that 
  $\varphi^{-1}|E|\subset (\Delta^0)^s\setminus V^s$, the metrized
  $\rr$-divisor $\varphi|_{V^s}^*\ov D$ is Lear extendable over
  $(\Delta^0)^s$. Then the extended local height jump function $\ov J
  \colon \qq_{\geq 0}^r \to \rr$ at $p \in X$ as defined above is
  $\qq$-concave.
\end{proposition}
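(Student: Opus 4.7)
The plan is to reduce $\qq$-concavity of $\ov J$ to the superadditivity inequality $\ov J(\bd{m}+\bd{m}') \geq \ov J(\bd{m}) + \ov J(\bd{m}')$ for $\bd{m}, \bd{m}' \in \zz_{\geq 0}^r$. Combined with the homogeneity of $\ov J$ noted in the text, superadditivity yields concavity on $\qq_{\geq 0}^r$ after clearing denominators. Writing $J(\bd{m}) = \sum_i m_i a_i - a(\bd{m})$, superadditivity of $\ov J$ is equivalent to subadditivity of the function $a$, so the concrete inequality to prove is
\[ a(\bd{m}+\bd{m}') \leq a(\bd{m}) + a(\bd{m}'), \quad \bd{m}, \bd{m}' \in \zz_{\geq 0}^r. \]

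To establish this, I would introduce the holomorphic map $\psi\colon (\Delta^0)^2 \to X$ defined in the local coordinates around $p$ by $\psi(u,v) = (u^{m_1}v^{m_1'}, \ldots, u^{m_r}v^{m_r'}, 0, \ldots, 0)$, after shrinking the coordinate neighbourhood if necessary. Then $\psi(0,0) = p$ and $\psi^{-1}|E| \subseteq \{uv=0\} = (\Delta^0)^2 \setminus V^2$. By the hypothesis of the proposition, $\psi|_{V^2}^*\ov D$ admits a \subpol-Lear extension over $(\Delta^0)^2$; choosing $\psi^*\ov D_X$ as auxiliary metrized $\rr$-divisor, I may write
\[ [\psi|_{V^2}^*\ov D, (\Delta^0)^2] = \psi^*D_X + \alpha\{u=0\} + \beta\{v=0\} \]
for some $\alpha, \beta \in \rr$. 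Applying Theorem \ref{thm:1} to the diagonal slice $\iota_d\colon \Delta^0 \to (\Delta^0)^2$, $t \mapsto (t,t)$, with the semipositive metrized divisor $\psi|_{V^2}^*\ov D$ and noting that $\psi\circ\iota_d$ has boundary multiplicities $m_i + m_i'$, will yield at once $\alpha + \beta \geq a(\bd{m}+\bd{m}')$.

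The core of the argument, and the step I expect to be the main obstacle, is the identification $\alpha = a(\bd{m})$ and $\beta = a(\bd{m}')$. For this, I would fix a generic point $(0,v_0)$ of $\{u=0\}$ with $v_0 \neq 0$, which lies outside the codimension-two locus $\{(0,0)\}$ of the boundary $\{uv=0\}$. The defining subpolynomial inequality of Definition \ref{def:5} at $(0,v_0)$, with local equation $u$ of $\{u=0\}$, reads
\[ -\log\mu(1/|u|) \leq g_{\psi^*D} - g_{\psi^*D_X} + \alpha \log|u| \leq \log\mu(1/|u|) \]
on a neighbourhood of $(0,v_0)$. Restricting this bound to the horizontal slice $v = v_0$ reproduces verbatim the 1-variable subpolynomial bound defining the Lear extension of the composed map $u \mapsto \psi(u,v_0)$, whose boundary multiplicities at $p$ are $\bd{m}$. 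Uniqueness of \subpol-Lear extensions (Proposition \ref{prop:1}) then forces $\alpha = a(\bd{m})$; the analogous argument on vertical slices gives $\beta = a(\bd{m}')$. Substituting these equalities into the diagonal inequality $\alpha + \beta \geq a(\bd{m}+\bd{m}')$ produces the desired subadditivity of $a$ and hence the proposition.
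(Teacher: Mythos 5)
Your argument is correct and is essentially the paper's own proof: a monomial map from a polydisk whose pullback Lear coefficients are identified by restricting the subpolynomial bounds of Definition \ref{def:5} to transversal slices through generic boundary points (using uniqueness, Proposition \ref{prop:1}), followed by an application of Theorem \ref{thm:1} to a diagonal curve. The only difference is that you establish pairwise superadditivity on $\zz_{\geq 0}^r$ and conclude by homogeneity, whereas the paper runs the identical construction on $(\Delta^0)^s$ with the weighted diagonal $t\mapsto(t^{\lambda_1},\dots,t^{\lambda_s})$ to obtain the full concavity inequality in one step; the two are equivalent (one small detail to add to your $\psi$: insert a small positive scaling constant into each monomial, as the paper does, so that coordinates with $m_i=m_i'=0$ still land inside the coordinate chart rather than being constantly equal to $1$).
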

\begin{proof} Fix $s \in \zz_{>0}$ and fix $r$-tuples $v_j=(m_{1,j},\ldots,m_{r,j}) \in \qq^r_{\geq 0}$ for $j=1,\ldots,s$. Let $\lambda_1,\ldots,\lambda_s$ be scalars in $\qq_{> 0}$. We would like to show the inequality
\begin{equation} \label{ineq} \ov J(\sum_{j=1}^s \lambda_j v_j) \geq \sum_{j=1}^s \lambda_j \ov J(v_j) 
\end{equation}
in $\rr$. By the homogeneity of $\ov J$, we may assume $\lambda_j \in \zz_{>0}$ for all $j=1,\ldots,s$, $m_{i,j} \in \zz_{\geq 0}$ for all $i=1,\ldots,r$ and $j=1,\ldots,s$. Write $Y=(\Delta^0)^s$. Consider then the map $\varphi \colon Y \to X$ given in the local holomorphic coordinates $z=(z_1,\ldots,z_n)$ by the assignment 
\[ (y_1,\ldots,y_s) \mapsto (u\cdot y_1^{m_{1,1}}\cdots
  y_s^{m_{1,s}},\ldots,u\cdot y_1^{m_{r,1}}\cdots
  y_s^{m_{r,s}},z_{r+1}(p),\ldots,z_n(p)) \, . \]
where $u\in \rr_{>0}$ is small enough so the image of $Y$ is contained
in the coordinate neighborhood of $p$. 
Also consider the map $\chi \colon \Delta^0 \to Y$ given by $t \mapsto (t^{\lambda_1},\ldots,t^{\lambda_s})$.  We have
\[ \varphi \circ \chi \colon \Delta^0 \to X \, , \quad t \mapsto
  (u\cdot t^{\sum_{j=1}^s \lambda_j m_{1,j}}, \ldots, u\cdot
  t^{\sum_{j=1}^s \lambda_j m_{r,j}},z_{r+1}(p),\ldots,z_n(p)) \]
and we conclude that the height jump divisor $J_{\varphi \circ
  \chi,\ov D}$  is equal to $J(\sum_{j=1}^s \lambda_j v_j)\,[0]$  in
$\Car(\Delta^0)_\rr$. Similarly, if we let $\eta_j \colon \Delta^0 \to
Y$ for $j=1,\ldots,s$ be the map that sends $t$ to $(1/2,\ldots,1/2,
t,1/2,\ldots,1/2)$, with the $t$ at the $j$-th spot, then the height jump divisor for $\varphi \circ \eta_j$  is given by $J(v_j)\,[0] $ in $\Car(\Delta^0)_\rr$. By assumption $\varphi|_{V^s}^*\ov D$ has a Lear extension over $Y$.
Writing $F_1,\ldots,F_s$ for the coordinate divisors in $Y$, the
definition of $\ov J$ gives  the equality 
\[   \varphi^*[\ov D,X] - [\varphi|_{V^s}^*\ov D,Y] = \sum_{j=1}^s \ov
  J(v_j) F_j \]
in $\Car(Y)_\rr$. Applying Theorem \ref{thm:1} to the map $\chi \colon
\Delta^0 \to Y$ we obtain the inequality
\begin{equation} \label{startineq}   \chi^*[\varphi^* \ov D,Y] - [\chi^*\varphi^* \ov D, \Delta^0] \geq 0 \, .
\end{equation}
Here, for readability, we have left out the subscripts $|_V$ and $|_{V^s}$ from the notation. 
On the other hand we may compute
\begin{equation} \label{chainofeq}
\begin{aligned} \chi^*[\varphi^* \ov D,Y] - [\chi^*\varphi^* \ov D,
  \Delta^0]  & = \chi^*\left(-  \sum_{j=1}^s \ov J(v_j) F_j + \varphi^* [\ov D,X]  \right) -[(\varphi \circ\chi)^* \ov D, \Delta^0] \\
& = -\sum_{j=1}^s \lambda_j \ov J(v_j)\,[0] + (\varphi \circ\chi)^*[\ov D,X] -[(\varphi \circ\chi)^* \ov D, \Delta^0] \\
&= -\sum_{j=1}^s \lambda_j \ov J(v_j)\,[0] + J_{\varphi \circ \chi,\ov D} \\
& =  -\sum_{j=1}^s \lambda_j \ov J(v_j)\,[0]  + \ov J(\sum_{j=1}^s \lambda_j v_j)\,[0] \, .
\end{aligned} \end{equation}
Combining (\ref{startineq}) and (\ref{chainofeq}) we obtain (\ref{ineq}) as required.
\end{proof}

Note that $V^s$ has a normal crossings boundary divisor in $(\Delta^0)^s$. Therefore, combining Proposition \ref{concave} with Theorem \ref{lear_exists} and Pearlstein's result \cite[Theorem~5.37]{pearldiff}  we obtain the following.
\begin{theorem} Let $\bb{V}$ be an admissible polarized variation of Hodge structures of weight $-1$ on $U$, let $J(\bb{V})\to U$ denote the associated  intermediate jacobian fibration, and let  $\nu \colon U \to J(\bb{V})$ be an admissible normal function. Let $\ov \BB$ be the biextension line bundle on $J(\bb{V})$ endowed with its canonical metric. Then the height jump function $\ov J \colon \qq_{\geq 0}^r \to \rr$ at $p \in X$
is given by a $\qq$-concave rational function in $\qq(x_1,\ldots,x_r)$.
\end{theorem}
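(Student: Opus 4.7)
The plan is to combine Proposition \ref{concave} with Pearlstein's theorem \cite[Theorem~5.37]{pearldiff}: the former yields $\qq$-concavity of the extended local height jump function $\ov J$ for $\nu^{\ast}\ov \BB$, while the latter yields its rationality, and the two together give the theorem.

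First I would verify the hypotheses of Proposition \ref{concave} for the metrized $\rr$-divisor associated to $\nu^{\ast}\ov \BB$. Semipositivity on $U$ is exactly Theorem \ref{semipositive}, and sp-Lear extendability over $X$ follows from Theorem \ref{lear_exists}. The remaining condition is that for every $s \in \zz_{>0}$ and every holomorphic map $\varphi \colon Y=(\Delta^{0})^{s} \to X$ satisfying $\varphi^{-1}|E|\subset Y\setminus V^{s}$, the pullback $\varphi|_{V^{s}}^{\ast}(\nu^{\ast}\ov \BB)$ is Lear extendable over $Y$. Here the boundary $Y\setminus V^{s}$ is a reduced normal crossings divisor in the smooth polydisk $Y$, the restriction $\varphi|_{V^{s}}\colon V^{s}\to U$ pulls $\bb V$ back to an admissible polarized variation of Hodge structure of weight $-1$ on $V^{s}$, and $\nu\circ \varphi|_{V^{s}}$ is an admissible normal function of the pulled-back intermediate jacobian fibration. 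These are all consequences of the standard functoriality of admissibility under holomorphic base change from a smooth manifold whose complement is a normal crossings divisor, so Theorem \ref{lear_exists} applies on the base $Y$ and gives the desired Lear extension.

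With the hypotheses of Proposition \ref{concave} verified, we conclude that $\ov J \colon \qq_{\geq 0}^{r}\to \rr$ is $\qq$-concave. Pearlstein's theorem \cite[Theorem~5.37]{pearldiff} asserts that $\ov J$ is moreover the restriction of a rational function in $\qq(x_{1},\ldots,x_{r})$, and combining the two statements gives that $\ov J$ is a $\qq$-concave rational function, as claimed.

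The substantial input is already absorbed into the cited results: the rationality relies on Pearlstein's asymptotic analysis of admissible normal functions via nilpotent orbits, and the concavity on our Theorem \ref{thm:1} via Proposition \ref{concave}. The only step that we need to spell out ourselves is the functoriality of admissibility under pullback by $\varphi$; this is the main point where one must be careful, but it is routine in the admissible setting, so no new analytic work is required.
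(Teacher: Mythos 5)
Your proposal is correct and follows the paper's own route exactly: the paper deduces the theorem by combining Proposition \ref{concave} with Theorem \ref{lear_exists} (noting that $V^s$ has normal crossings boundary in $(\Delta^0)^s$, so that the Lear-extendability hypothesis of the proposition is met for the pulled-back admissible normal function) and with Pearlstein's rationality result \cite[Theorem~5.37]{pearldiff}. You merely spell out the functoriality of admissibility under pullback a bit more explicitly than the paper does, which is fine.
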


\subsection*{Acknowledgements} We would like to thank R. Hain,
G. Pearlstein and P. Brosnan  for
helpful discussions and pointers to the literature.

\end{document}